\title{Behavior of Fr\'{e}chet mean and Central Limit Theorems on spheres}
\author{Do Tran\thanks{
                  Department of Mathematics, Duke University, dotran@math.duke.edu}
        }
\DeclarePairedDelimiter\norm{\lVert}{\rVert}%
\newtheorem{theorem}{Theorem}[section]
\newtheorem{proposition}[theorem]{Proposition}
\newtheorem{corollary}[theorem]{Corollary}
\numberwithin{equation}{section}
\theoremstyle{definition}
\newtheorem{definition}[theorem]{Definition}
\newtheorem{remark}[theorem]{Remark}
\newtheorem{expl}[theorem]{Example}
\newtheorem{assumption}[theorem]{Assumption}
\newenvironment{AMS}{}{}
\newenvironment{keywords}{}{}
\newcommand\CC{\mathcal{C}}
\newcommand\NN{\mathcal{N}}
\newcommand\XX{\mathcal{X}}
\newcommand\RR{\mathbb{R}}
\newcommand\BI{\boldsymbol{I}}
\newcommand\pp{\mathscr{P}}
\newcommand\dd{\textbf{d}}
\newcommand\muu{\bar\mu}
\newcommand\fc{\text{Fr\'{e}chet }}
\newcommand{\<}{\langle}
\renewcommand{\>}{\rangle}
\begin{document}
\newpage
\maketitle
\begin{abstract}
We compute higher derivatives of the
Fr\'{e}chet function on spheres with an absolutely continuous
and rotationally symmetric probability distribution.  Consequences include
(i)~a practical condition to test if the mode of the symmetric
distribution is a local Fr\'{e}chet mean; (ii)~a Central Limit Theorem
on spheres with practical assumptions and an explicit limiting
distribution; and (iii)~an answer to the question of whether the
smeary effect can occur on spheres with absolutely continuous and rotationally symmetric
distributions: with the method presented here, it can in dimension at least~$4$.
\end{abstract}

\begin{keywords}{Keywords:}
	\emph{Spherical statistic, Fr\'echet mean, central limit theorem, smeary}
\end{keywords}

\begin{AMS}{AMS 2000 Subject Classification:}
  \emph{ 60F05, %
   62H11}%
\end{AMS}

\section{Introduction}\label{s:intro}

\indent%
Methods of deriving Central Limit Theorems (CLT) for intrinsic
Fr\'echet means on manifolds in \cite{BP05, HH15, BL17, EH18} often
rely on the Taylor expansion of the Fr\'echet function at the mean. Thus,
two scenarios can happen depending on the invertibility of the Hessian
of the Fr\'echet function. The first case, in which the Hessian is
invertible and positive definite, results in a CLT that behaves
classically as shown in \cite{BP05, BL17}. The other one, when the Hessian
vanishes at the mean, results in a non classical behavior called
\emph{smeary CLT} as first shown in \cite{HH15, EH18}. In
both cases, an assumption about the differentiability the Fr\'echet
function in a neighborhood of the mean is required. Otherwise, the
reference measure would be required to give no mass to an open
neighborhood of the cut locus of the mean \cite{BL17, EGH19}. The reason for this
assumption is that the squared distance function from a point is not
differentiable at its cut locus, which means that the derivative
tensors would blow up near the cut locus.

This problem is addressed here by giving a practical condition to test
if the Fr\'echet function is differentiable at a point on a sphere of
arbitrary dimension (Proposition~\ref{p:diffcondprob}). Specifically,
the central observation is that in dimension at least $4$ the Fr\'echet function has derivative
of order $4$ at a point $p$ if the reference measure has a bounded
distribution function (with respect to the volume measure) in an open
neighborhood of the antipode of~$p$.

In the classical CLT case on manifolds, an assumption about the
invertibility of the Hessian of the Fr\'echet function at the mean is also required. 
The CLTs in \cite{BP05, BL17} and a more general form in \cite{EH18} need explicitly computed derivatives of the Fr\'echet function to be made precise. To date, a CLT with an explicit limit distribution for Fr\'echet means is only available on circles
\cite{HH15} and on spheres with specific models considered in \cite{EH18}. 

Here we fill this gap, for the case of absolutely continuous and rotationally symmetric distributions on spheres, by giving
an explicit computation of derivative tensors of the \fc function at the mean (Proposition~\ref{prop:derivative_tensors}). Two consequences include a practical condition
for $p$ to be a local Fr\'echet mean (Proposition \ref{prop:behavior}) and a
Central Limit Theorem with an explicit limit distribution with
Fr\'echet mean~at~$p$  (Theorem~\ref{thm:CLT_classic}).
 
The smeary effect occurs when the Hessian of the Fr\'echet
function vanishes and a tensor obtained by differentiating an even
number of times is positive definite at the mean.  In those
cases, the sample mean fluctuates asymptotically at a scale of
$n^{\alpha}$ for some $\alpha <1/2$. To date, examples of smeary
CLT on circles and spheres were observed in \cite{HH15, EH18, Elt19}. In
those examples, the reference measures are singular at the mean.  A
natural question arises: can smeariness happen for non-singular measures?

The computation of derivative tensors of the \fc function in Proposition~\ref{prop:derivative_tensors} yields an answer to this
question on spheres, when the distribution is rotationally symmetric. In
particular, in that setup when the distribution is absolutely
continuous and rotationally symmetric about its mean, the smeary effect can only appear in dimension~$4$ or
higher and is more likely as the dimension grows
(Remark~\ref{r:smeary}). This is because of the fact that in low
dimensions, the volume element cannot outweigh the negativity of the
derivative tensors near the cut locus of the mean. As a result, the
derivative tensor of order $4$ is always negative in dimension~$2$
and~$3$. In higher dimensions, we give a sufficient condition for a
smeary CLT to occur with $\alpha = 1/6$ (Theorem~\ref{thm:smearyCLT}).

The first example of a smeary CLT on spheres with no mass in a neighborhood of the cut locus of the mean was shown in \cite{Elt19}. In that example, the reference measure gives a singular mass to the mean. Example~\ref{e:smeary} exhibits the smeary effect of a local Fr\'echet mean when the measure has no singular mass. Specifically, the measure has a uniformly distributed portion on a bottom cup and another uniformly distributed portion on a thin upper strip near the equator and zero everywhere else. The South pole is a local mean and the strip near the equator gets thinner as the dimension grows.

In contrast to the method of expressing the Frech\'et function
explicitly in a polar coordinate system as shown in \cite{HH15, EH18, Elt19}, the main method used in
this paper is applying Jacobi fields with a non-parametric approach to compute derivative tensors of
the squared distance function on spheres (Section
\ref{s:derv_tensors}). The rotationally symmetric condition of the
distribution then allows us to derive derivative tensors of the
Frech\'et function (Section \ref{s:differentiability}). 

It is also worth mentioning that the method in this paper does not
work well for models in spheres of dimension $2$ and $3$ in which
there is a singular mass at the cut locus of the mean. In such scenarios, writing the Frech\'et function
explicitly in a coordinate system yields better results, as shown in
\cite{EH18}.

\section{Basic notations and principal results}
\subsection{Basic notations}

Suppose that $M$ is a Riemannian manifold with geodesic length $\dd$ and $\mu$ is a probability measure on the Borel $\sigma$-algebra of $M$. For any point $y$ in $M$, set 
	\begin{equation} \label{eq:squared_distance}
		\rho_y(x)=\frac{1}{2}\dd^2(y,x)=\frac{1}{2}\dd_y^2(x).
	\end{equation} 
The \emph{\fc function} of $\mu$ is defined as 
	\[ F(x) =\int_M \rho_y(x) d\mu (y) . \]
We shall always assume that $F$ is finite on $M$. The \emph{(intrinsic)} \emph{Fr\'echet mean set} $\muu$ of $\mu$ is defined as 
	\begin{equation*}
		\muu=\{y \in M: F(y) \leq F(x), \text{for all } x \in M \},
	\end{equation*}
which can be summarized as $\muu =\arg \min_{x}F(x)$. In general, we use $\bar \nu$ to denote the mean set of a probability measure $\nu$.

Consider i.i.d. $M$-valued random variables $X_1,\ldots, X_n$ with the common distribution $\mu$. The \emph{sample \fc function}, which is denoted by $F_n$, is the \fc function of the \emph{empirical distribution} $\mu_n=\frac{1}{2}\sum_{i=1}^n \delta_{X_i}$. The Fr\'echet mean set $\muu_n$ is called the \emph{(intrinsic) sample \fc mean set}. 
 
 We will use $\exp_p$ and $\log_p$ for the exponential map and the logarithm map at a point $p \in M$ and $\CC_{U}$ for the cut locus of a closed subset $U$ of $M$.
 
The following assumptions are familiar in developing CLTs for \fc means in \cite{BP05, BB08, BL17} and its smeary generalization in \cite{EH18}.
	\begin{enumerate}
		\item[A1.]	The \fc mean $\muu$ is unique. \label{cond:A1}
		\item[A2.]  	$\mu$ assigns zero mass to the cut locus of $\muu$, i.e. $\mu(\CC_{\muu})=0.$ \label{cond:A2}
		\item[A3.] $F$ is differentiable up to order $r$ in a neighborhood $U$ of $\muu$ and $\nabla^rF$ is the first non-zero derivative tensor at $\muu$ of $F$. \label{cond:A3}
		\item[A4.] $\nabla ^rF_{\muu}$ is non-degenerate, i.e. there is no non-zero vector $Z \in T_{\muu}M$ such that $\nabla^r F_{\muu}(Z ^{\otimes r})=0$.\label{cond:A4}
	\end{enumerate}

The CLT for the intrinsic sample \fc mean $\muu_n$ in \cite{BP05, BL17, BB08} and its smeary generalization in \cite{EH18} can be summerized as follows.

\begin{theorem}[CLT for intrinsic mean]\label{thm:CLT_for_intrinsic_means} Under assumptions \hyperref[cond:A1]{A1}-\hyperref[cond:A4]{A4}, for any measurable choice of $\muu_n$
	\begin{equation}\label{eq:new_form_CLT}
		\lim_{n\to \infty} n^{1/(2k+2)} \log_{\muu}\muu_n \sim H_{\sharp}(\lim_{n \to \infty} n^{1/2} \overline{\log_{\muu, \sharp} \mu_n}),
	\end{equation}
where $k$ is the \emph{degree of smeariness}, $H: T_{\muu}M \to T_{\muu}M$ is a specific map which we call the \emph{correction map}, and $H_{\sharp}$ is the pushforward of measures induced by $H$. 
\end{theorem}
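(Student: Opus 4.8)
The plan is to follow the now-standard $M$-estimation route for Fréchet means, adapting the argument of \cite{BP05, BL17} and its smeary generalization \cite{EH18} to the exact order $r=2k+2$ at which the first nonzero derivative tensor of $F$ appears. I would work entirely in the normal coordinate chart $\log_{\muu}$ on the neighborhood $U$ supplied by assumption \hyperref[cond:A3]{A3}, so that the optimization problem becomes a Euclidean one on an open subset of $T_{\muu}M$. Assumption \hyperref[cond:A2]{A2} lets me ignore the cut locus up to a $\mu$-null event, so that $x\mapsto \log_{\muu,\sharp}\rho_y(x)$ and its derivatives in $x$ are integrable and the interchange of $\nabla$ and $\int d\mu$ is valid on $U$; this is where assumption \hyperref[cond:A3]{A3} is genuinely used. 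First I would establish consistency: $\muu$ unique (\hyperref[cond:A1]{A1}) together with the standard Fréchet-mean consistency result (Ziezold/Bhattacharya–Patrangenaru) gives $\muu_n \to \muu$ a.s., so eventually $\log_{\muu}\muu_n$ is defined and lies in $U$.

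Next comes the core Taylor-expansion step. Writing $\psi_n=\log_{\muu}\muu_n$ and $G_n = \nabla F_n$, the first-order optimality condition is $G_n(\psi_n)=0$ (or, at the boundary of a shrinking neighborhood, negligible). I would expand $G_n$ around $0$: the population part $\nabla F$ has vanishing derivatives of orders $1,\dots,r-1$ at $\muu$ and leading term $\tfrac{1}{(r-1)!}\nabla^r F_{\muu}(\psi_n^{\otimes(r-1)}, \cdot)$, while the empirical fluctuation $\sqrt{n}\,(G_n - \nabla F)(0) = \sqrt{n}\,\nabla F_n(0)$ converges in distribution by the classical CLT in $T_{\muu}M$ to a centered Gaussian — and this is exactly $\sqrt{n}\,\overline{\log_{\muu,\sharp}\mu_n}$ up to the identification of $\nabla\rho_y(\muu)$ with $-\log_{\muu}y$. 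Balancing the deterministic term of size $\|\psi_n\|^{r-1}$ against the stochastic term of size $n^{-1/2}$ forces $\|\psi_n\| \asymp n^{-1/(2(r-1))} = n^{-1/(2k+2)}$ when $r=2k+2$. A change of variables $\psi_n = n^{-1/(2k+2)} W_n$ then turns the optimality condition, after multiplying by the appropriate power of $n$, into
\begin{equation*}
	\tfrac{1}{(r-1)!}\,\nabla^r F_{\muu}(W_n^{\otimes(r-1)},\cdot) = -\,\sqrt{n}\,\nabla F_n(0) + o_{P}(1),
\end{equation*}
and I would invoke an argmax/argzero continuous-mapping theorem (as in \cite{EH18}, ultimately \cite{Huber,vdV}) to pass to the limit. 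The nondegeneracy assumption \hyperref[cond:A4]{A4} guarantees that the map $W \mapsto \tfrac{1}{(r-1)!}\nabla^r F_{\muu}(W^{\otimes(r-1)},\cdot)$ is a proper homeomorphism near $0$ (its only zero is $0$, and it is positive along every ray since $\nabla^r F_{\muu}(W^{\otimes r})>0$), so it admits a continuous inverse; that inverse, composed with the sign/scaling bookkeeping, is precisely the correction map $H$. Hence $W_n \goesto H_\sharp$ applied to the Gaussian limit, which is (\ref{eq:new_form_CLT}).

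The main obstacle is the argmax-continuous-mapping step done rigorously: one must show the sequence $W_n$ is uniformly tight (a rate argument via a peeling/chaining bound on $\nabla F_n - \nabla F$ over shrinking shells, as in \cite{BL17, EH18}), that the remainder in the Taylor expansion of $\nabla F_n$ is genuinely $o_P(1)$ after rescaling — which again needs \hyperref[cond:A3]{A3} to control $\nabla^{r+1}F$ uniformly on $U$ and the cut-locus null condition \hyperref[cond:A2]{A2} to keep everything integrable — and that the limiting zero is unique so that no measurable-selection pathology survives in the limit (this is where \hyperref[cond:A4]{A4} does its second job). Since all of these ingredients are exactly the ones assembled in \cite{EH18}, I would state the theorem as a consequence of that machinery, contributing here only the identification of $k$ as the degree of smeariness and the explicit form of $H$ in terms of $\nabla^r F_{\muu}$; the genuinely new content of this paper is the \emph{computation} of $\nabla^r F_{\muu}$ for rotationally symmetric distributions, which feeds into $H$ and is carried out in Sections~\ref{s:derv_tensors}--\ref{s:differentiability}.
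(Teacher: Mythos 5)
The paper does not actually prove this theorem: it is explicitly presented as a summary of known results from \cite{BP05, BL17, BB08} and the smeary generalization \cite{EH18}, so your decision to defer to that machinery and only contribute the identification of $k$ and of $H$ is exactly the paper's own treatment. Your sketch of the underlying $M$-estimation route (consistency from A1, interchange of $\nabla$ and $\int d\mu$ from A2--A3, Taylor expansion of $\nabla F_n$ at $\muu$, balancing $\|\psi_n\|^{r-1}$ against the $n^{-1/2}$ fluctuation, argmax continuous mapping) is a faithful account of how \cite{EH18} proceeds.

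There are, however, two genuine problems in the part you claim as your contribution. First, the bookkeeping between $r$ and $k$ is wrong: balancing $\|\psi_n\|^{r-1}\asymp n^{-1/2}$ gives $\|\psi_n\|\asymp n^{-1/(2(r-1))}$, and matching this with the rate $n^{1/(2k+2)}$ in the statement forces $r-1=k+1$, i.e.\ $k=r-2$, which is what the paper records from \cite[Theorem 2.11]{EH18}. Your claimed identification $r=2k+2$ is inconsistent with your own display (the equality $n^{-1/(2(r-1))}=n^{-1/(2k+2)}$ requires $r=k+2$) and with Theorem~\ref{thm:smearyCLT}, where $r=4$, $k=2$ and the rate is $n^{1/6}$, whereas $r=2k+2$ would predict $r=6$. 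Second, assumption A4 (no nonzero $Z$ with $\nabla^r F_{\muu}(Z^{\otimes r})=0$) does not by itself make $W\mapsto \frac{1}{(r-1)!}\nabla^r F_{\muu}(W^{\otimes(r-1)},\cdot)$ an injective proper homeomorphism near $0$; it excludes degenerate directions but not failure of injectivity of $\tau$. This is precisely why the remark following the theorem replaces A4 by the stronger orthogonal-decomposability assumption A5 to get an explicit inverse $H$, and why in the rotationally symmetric spherical case the paper computes $\nabla^4F_{\muu}(Z^{\otimes 3})=Z\|Z\|^2\beta_d$ directly, so that $\tau$ is visibly invertible even though A5 fails (Remark~\ref{re:not_odoco}). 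Your argument should either invoke A5 or verify invertibility of $\tau$ in the specific setting rather than deducing it from A4.
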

\begin{remark}
\begin{enumerate}
		
	\item In the classical CLT on Euclidean spaces and the CLT on manifolds in \cite{BP05}, the degree of smeariness $k$ is $0$, in other words, there is no smeary effect in those cases. In general, \cite[Theorem 2.11]{EH18} shows that $k=r-2$ with $r$ is the degree of the first non-zero derivative tensor at $\muu$ of $F$ in assumptions \hyperref[cond:A3]{A3} and \hyperref[cond:A4]{A4}.

	\item In the CLT in \cite{BP05}, the correction map $H$ is the inverse of the Hessian of the \fc function at $\muu$, i.e. $H=(\nabla^2F_{\muu})^{-1}$.

	\item In the smeary generalization CLT in \cite{EH18}, the correction map $H$ is the inverse of the following map
		\begin{equation} \label{eq:tau_map}
		\begin{split}
			\tau : T_{\muu}M &\to T_{\muu}M\\
			Z &\mapsto  \frac{1}{(r-1)!}\nabla^{r}F_{\muu}(Z^{\otimes (r-1)}).
		\end{split}	
		\end{equation}
In order to give an explicit form of $H$, the following assumption was used in \cite{EH18} instead of assumption \hyperref[cond:A4]{A4}.

\begin{enumerate}
		\item[A5.]  There is an orthonormal basis $\{ e_1,\ldots, e_d \}$ of $T_{\muu}M$ and $\lambda_i>0$ for all $i=1,\ldots, d$ such that
		\begin{equation*} \label{cond:A5}
			\nabla^rF_{\muu}=\sum_{i=1}^d r!\lambda_i e_i^{\otimes r}.	
		\end{equation*}
		In other words, $\nabla^r F$ is \emph{othorgonal decomposable}; see \cite{Rob16}.
	\end{enumerate}
With assumption \hyperref[cond:A5]{A5} in effect, the map $\tau$ in \ref{eq:tau_map} is 
	\begin{equation*}
		\begin{split}
			\tau: T_{\muu}M &\to T_{\muu}M \\
				\sum_{i=1}^d a_i e_i &\mapsto \sum_{i=1}^d r\lambda_i a_i^{r-1}e_i.
		\end{split}
	\end{equation*}
The correction map is then
	\begin{equation}
		\begin{split}
			H: T_{\muu}M &\to T_{\muu}M \\
			\sum_{i=1}^d a_i e_i &\mapsto \sum\limits_{i=1}^d\Big(\frac{a_i}{r\lambda_i}\Big)^{1/(r-1)}e_i.	
		\end{split}
	\end{equation}
Note here that assumption~\hyperref[cond:A5]{A5} is quite strong as shown in \cite{Rob16}. In addition, finding a decomposition of $\nabla^rF_{\muu}$ that satisfies assumption~\hyperref[cond:A5]{A5} is NP-hard \cite{HL13}. We will show in Remark~\ref{re:not_odoco} that assumption~\hyperref[cond:A5]{A5} does not hold when $M$ is the unit sphere and $\mu$ is absolutely continuous and rotationally symmetric about its mean $\muu$. 
\item Recently, it has been shown in \cite{my_thesis} that the correction map $H$ can be defined independently from the derivative tensor of $F$ as follows.
	For any positive $\epsilon$ and any point $x \in M$, write $\muu_{\epsilon, x}$ for the \fc mean of $(1-\epsilon)\mu+\epsilon \delta_x$. Define
		\begin{equation} \label{e:chap3_inverse_hess}
			\begin{split}
				h:M	&\to T_{\muu}M\\
				x &\mapsto \lim \limits_{\epsilon \to 0} \frac{\log_{\muu}\muu_{\epsilon,x}}{\epsilon^{1/(r-1)}}.
			\end{split}
		\end{equation}
	The correction map $H$ is the composition
		\begin{equation*}\label{e:chap3_inverse_hess_2}
			\begin{split}
				H: T_{\muu}M &\to T_{\muu}M\\
				X &\mapsto h(\exp_{\muu}X). 
			\end{split}
		\end{equation*}

\end{enumerate}
\end{remark}

The left side of Eq.~\eqref{eq:new_form_CLT} is the asymptotic distribution of a rescaled of the image of the intrinsic sample mean $\muu_n$ under the logarithm map $\log_{\muu}$. In other words, the left side of Eq~\ref{eq:new_form_CLT} is the asymptotic distribution of the intrinsic sample mean. 

On the right side of Eq.~\eqref{eq:new_form_CLT}, $\log_{\muu,\sharp} \mu_n$ is the pushforward of the empirical measure $\mu_n$ under the logarithm map $\log_{\muu}$ and $\overline{\log_{\muu, \sharp} \mu_n}$ is the \fc mean of $\log_{\muu,\sharp} \mu_n$. We call $\overline{\log_{\muu, \sharp} \mu_n}$ the \emph{tangent sample mean}. The classical CLT for $\log_{\muu,\sharp}\mu$ on the Euclidean space $T_{\muu}M$ implies that $n^{1/2} \overline{\log_{\muu, \sharp} \mu_n}$ is asymptotic normal with distribution $\NN$. 

\begin{definition}\label{defi:tangental_CLT}
	We call the normal law
		\[\NN \sim  \lim \limits_{n \to \infty } n^{1/2} \overline{\log_{\muu, \sharp} \mu_n} \]
	the \emph{tangent limit distribution} of $\mu$.
\end{definition}

In words, the CLT for intrinsic mean (Theorem~\ref{thm:CLT_for_intrinsic_means}) states that the asymptotic distribution of the intrinsic sample mean deviates from the tangent limit distribution and the deviation is encoded in correction map $H$. 

\subsection{Main results}

\subsubsection{Differentibility of \fc function}
Consider $M$ the unit sphere $S^d$ of dimension $d$ with the standard great circle geodesic distance $\dd$ and the measure $\mu$ is absolutely continuous with respect to the volume measure of $M$. The following proposition gives a practical condition to test assumption~\hyperref[cond:A3]{A3}---that is to test if the Fr\'echet function is differentiable at a point $p$.

\begin{proposition}\label{p:diffcondprob}%
Let $f$ be the distribution function of $\mu$. Fix a point $p \in S^d$, suppose that there exists an $\epsilon$-ball $B_{\epsilon}(\CC_p)$ centered at $\CC_p$ such that for $j \leq 4$
\begin{itemize}
	\item[i.] $f$ is bounded on $B_{\epsilon}(\CC_p)$ when $d \geq j$, or
	\label{cond:i}
	\item[ii.] $f$ vanishes on $B_{\epsilon}(\CC_p)$ when $d < j$. \label{cond:ii}
\end{itemize}
Then the \fc function $F$ is differentiable of order $j$ in an open neighborhood of $p$.
\end{proposition}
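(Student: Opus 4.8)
The plan is to reduce the differentiability of $F$ near $p$ to a local integrability estimate on the derivative tensors of the squared distance $\rho_y$, and then to handle the only problematic region — a neighborhood of the cut locus $\CC_p = \{-p\}$ — by trading the blow-up of $\nabla^j \rho_y$ against the vanishing of the volume element there. First I would recall that on $S^d$ the function $y \mapsto \rho_y(x)$ is smooth jointly in $(x,y)$ away from the diagonal and away from the pair $\{x = -y\}$; on any compact set $K \ni p$ avoiding a fixed neighborhood of $-p$, the tensors $\nabla^j \rho_y(x)$ are uniformly bounded for $y$ ranging over the complement of that neighborhood, so the contribution of $\mu$ restricted there is smooth in $x$ by dominated convergence and differentiation under the integral sign. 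Thus everything comes down to the piece $\int_{B_\epsilon(-p)} \rho_y(x)\, d\mu(y)$ with $x$ in a small ball around $p$ (so $\dd(x, -p)$ is bounded below, i.e. it is $-p$ that is near the cut locus of $x$, not $x$ near the cut locus of $y$ — wait, we need it the other way).

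More precisely: for $x$ near $p$ and $y$ near $-p$, the point $x$ is near the cut locus of $y$, so $\nabla^j_x \rho_y(x)$ blows up as $y \to -p$ at a rate I would compute (or cite from Section~\ref{s:derv_tensors}) of order $\dd(-p, y)^{\,2-j}$ — the $j$-th derivative of a squared distance function picks up $j-2$ inverse powers of the distance to the cut point, since $\rho_y$ is $C^{1,1}$ but its Hessian eigenvalue in the radial-complement directions behaves like $\dd_y \cot \dd_y$, which is smooth, while the truly singular behavior enters only at the antipode and scales like $(\pi - \dd_y)^{2-j}$ after the first two derivatives. I would write, in geodesic polar coordinates centered at $-p$, $d\mu(y) = f(y)\, dV(y)$ with $dV \asymp \sin^{d-1}(s)\, ds\, d\theta \asymp s^{d-1}\, ds\, d\theta$ for $s = \dd(-p, y)$ small. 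Then the bound on the $x$-derivatives of the $B_\epsilon(-p)$-piece is controlled by
\[
\int_{B_\epsilon(-p)} \bigl\| \nabla^j_x \rho_y(x) \bigr\|\, d\mu(y) \;\lesssim\; \|f\|_\infty \int_0^\epsilon s^{\,2-j}\, s^{\,d-1}\, ds \;=\; \|f\|_\infty \int_0^\epsilon s^{\,d+1-j}\, ds,
\]
which is finite exactly when $d + 1 - j > -1$, i.e. $d \geq j$ — this is case (i). When $d < j$ the integrand is not integrable against a merely bounded density, which is why case (ii) instead demands that $f$ vanish on $B_\epsilon(-p)$, so that this piece is identically zero and contributes nothing.

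The remaining work is to upgrade these uniform integral bounds on the derivative tensors to genuine differentiability of $F$ of order $j$ on a neighborhood of $p$: I would do this by the standard argument that if $\nabla^i_x \rho_y(x)$ exists for $\mu$-a.e.\ $y$, is continuous in $x$ for each such $y$, and is dominated (uniformly for $x$ in a neighborhood of $p$) by a $\mu$-integrable function of $y$, then $\int \rho_y\, d\mu$ is $C^i$ with $\nabla^i F(x) = \int \nabla^i_x \rho_y(x)\, d\mu(y)$; iterating from $i=1$ up to $i=j$ and using the bound above for the $B_\epsilon(-p)$-piece together with the uniform smoothness on the complement gives the claim. The main obstacle I anticipate is making the derivative-tensor estimate $\|\nabla^j_x \rho_y(x)\| \lesssim \dd(-p,y)^{2-j}$ precise and uniform in $x$ near $p$ — this requires the explicit Jacobi-field computation of the higher derivatives of $\rho_y$ on the sphere promised in Section~\ref{s:derv_tensors}, and in particular identifying which tensor components actually carry the singular $(\pi - \dd_y)^{2-j}$ behavior versus which stay bounded. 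A secondary technical point is justifying that one may shrink the $\epsilon$-ball and the neighborhood of $p$ consistently so that "$x$ near $p$" and "$y$ near $-p$" never interact through the diagonal singularity of $\rho$, which is routine once the two neighborhoods are chosen with radii summing to less than $\pi$.
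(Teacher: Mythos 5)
Your overall architecture matches the paper's: reduce differentiability of $F$ near $p$ to convergence of $\int_{S^d}\nabla^j\rho_y\,d\mu(y)$, observe that only a neighborhood of the antipode is problematic, and there trade the blow-up of the derivative tensors against the volume element $\sin^{d-1}(s)\asymp s^{d-1}$ in polar coordinates; the paper does exactly this, using the explicit Jacobi-field formulas of Section~\ref{s:derv_tensors} to identify $g_y^{j-1}=(\dd_y\cot\dd_y)^{j-1}$ as the only unbounded term in $\nabla^j\rho_y$ near $\CC_p$, and then extends from $p$ to a neighborhood by noting the hypotheses are stable under perturbing $p$.

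However, the one quantitative step your argument hinges on is wrong, and you land on the correct threshold only because a second, arithmetic error cancels the first. You posit $\norm{\nabla^j_x\rho_y(x)}\lesssim s^{2-j}$ with $s=\dd(\CC_p,y)$, on the grounds that $\dd_y\cot\dd_y$ ``is smooth'' and the singularity enters only after two derivatives. But $t\cot t$ is smooth at $t=0$, not at $t=\pi$: as $\dd_y\to\pi$ one has $g_y=\dd_y\cot\dd_y\sim-\pi/(\pi-\dd_y)=-\pi/s$, so already the Hessian ($j=2$) blows up like $s^{-1}$ and $\rho_y$ is not $C^{1,1}$ up to the antipode. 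The correct rate is $\norm{\nabla^j_x\rho_y}\lesssim s^{1-j}$, the dominant term being $g_y^{j-1}$, and then $\int_0^\epsilon s^{1-j}\,s^{d-1}\,ds=\int_0^\epsilon s^{d-j}\,ds$ converges precisely when $d\geq j$, which is the proposition's dichotomy. With your rate you get $\int_0^\epsilon s^{d+1-j}\,ds$, which converges iff $d\geq j-1$, not ``$d\geq j$'' as you wrote; had you carried your own estimate through correctly you would have concluded, e.g., that on $S^2$ a bounded density near $\CC_p$ suffices for third-order differentiability, whereas the relevant integral $\int g_y^2\,d\mu\asymp\int_0^\epsilon s^{-2}\cdot s\,ds$ genuinely diverges for bounded positive $f$ there. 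So fix the exponent to $s^{1-j}$ and redo the one-line integral; the rest of your outline (splitting off the regular part, differentiating under the integral with a uniform dominating function, and shrinking neighborhoods consistently to cover all $q$ near $p$) is sound and is how the paper proceeds.
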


To compute the derivative tensors of the \fc function as in condition \hyperref[cond:A5]{A5}, we impose the following rotationally symmetric condition on $\mu$.

\begin{assumption}\label{assumption:1}
	The distribution function $f(\dd(p, \cdot ))$ of $\mu$ depends only on the distance $\dd(p, \cdot)$ from $p$, i.e.
 		\[ d\mu(q)=f(\dd(p,q))dV(q), \]
 	where $dV$ is the volume element.
\end{assumption}
\begin{remark}
	The point p in the above assumption will often be the \fc mean $\muu$. In such cases, we say $\mu$ is \emph{absolutely continuous and rotationally symmetric about its mean}.
\end{remark}
\noindent Under Assumption~\ref{assumption:1}, set 
	\begin{equation}
			\begin{split}
				\alpha_d &=\frac{V(S^{d-1})}{d}\int_0^{\pi}f(\varphi)d(\varphi \sin^{d-1}\varphi),\\
				\beta_d &=\frac{a_dV(S^{d-2})}{d+2}\int_0^{\pi}f(\varphi)d\Big(\sin^{d-3}\varphi \big(2\varphi\sin^2\varphi+3\cos \varphi \sin \varphi-3\varphi \big) \Big). 
			\end{split}	
		\end{equation}

The following result enables us to give explicit formulas of the correction map $H$ and hence of the CLT for Fr\'echet means on spheres.

\begin{proposition} \label{prop:derivative_tensors}
	Suppose that $\mu$ satisfies Assumption~\ref{assumption:1} and that that the Fr\'echet function $F$ is differentiable up to order $4$ in an open neighborhood of $p$. Let $Z$ be a vector in $T_{\muu}S^d$, then the derivative tensors of $F$ at $p$ are 
			\begin{equation*}
				\begin{split}
					\nabla F(Z)_p&=0; \ \nabla^2F(Z)_p=Z \alpha_d;\ \nabla^3F(Z,Z)=0;\ and \\
				 	\nabla^4F(Z,Z,Z)_p&=\begin{cases}Z \norm{Z}^2 \beta_d \text{ if } d\geq 4,\\
				 Z \norm{Z}^2 C \text{ with }C<0 \text{ if } d< 4.
				 				\end{cases}
				 \end{split}	
			\end{equation*}
\end{proposition}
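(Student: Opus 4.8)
The plan is to exploit the rotational symmetry to reduce the whole computation to a one-variable integral. Fix the point $p$ and a unit vector $Z \in T_p S^d$. Because $\mu$ satisfies Assumption~\ref{assumption:1}, the mass element depends on a point $q$ only through $\varphi = \dd(p,q)$, so I can foliate $S^d \minus \CC_p$ into geodesic spheres $\{\dd(p,\cdot)=\varphi\}$ and integrate over the "latitude" $\varphi \in (0,\pi)$ against the one-dimensional density obtained by pushing $\mu$ forward under $\dd(p,\cdot)$. Thus
\[
\nabla^k F_p(Z^{\otimes k}) = \int_0^{\pi}\!\!\int_{\dd(p,\cdot)=\varphi} \nabla^k \rho_y(p)(Z^{\otimes k})\, d(\text{area on the }\varphi\text{-sphere})\, f(\varphi)\,d\varphi ,
\]
and the differentiability hypothesis (which, by Proposition~\ref{p:diffcondprob}, is exactly what controls the blow-up of $\nabla^k\rho_y$ as $y \to \CC_p$) lets me differentiate under the integral sign up to order $4$.

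First I would record the pointwise derivative tensors $\nabla^k\rho_y(p)$ of the squared-distance function from a single point $y$, evaluated at $p$, in terms of Jacobi fields along the geodesic from $p$ to $y$; this is precisely the content of Section~\ref{s:derv_tensors}, which I may assume. For $y$ at distance $\varphi$ from $p$ these Jacobi-field formulas give $\nabla^k\rho_y(p)(Z^{\otimes k})$ as an explicit trigonometric function of $\varphi$ and of the angle $\theta$ between $Z$ and $\log_p y$ (on $S^d$ the relevant Jacobi fields are $\sin$/$\cos$ of the arclength, so everything is elementary). Next, for the inner integral over each $\varphi$-sphere I average these expressions over $\theta$: the $\varphi$-sphere is an $(d{-}1)$-sphere of radius $\sin\varphi$, and the average of $\cos^2\theta$, $\cos^4\theta$, etc., over $S^{d-1}$ is a standard constant (this is where the factors $V(S^{d-1})$, $V(S^{d-2})$, and the $d$-dependent normalizations in $\alpha_d$, $\beta_d$ come from, and where the dimension enters). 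Odd-order tensors vanish because $\rho_y$ is, to the relevant order, an even function of $\log_p y$ once we average over the antipodal symmetry of the $\varphi$-sphere, giving $\nabla F_p = 0$ and $\nabla^3 F_p = 0$; isotropy of the averaged $2$- and $4$-tensors forces $\nabla^2 F_p(Z^{\otimes 2}) = \alpha_d\|Z\|^2$ and $\nabla^4 F_p(Z^{\otimes 4}) = (\text{coefficient})\|Z\|^4$, and I then match the coefficient to the claimed integrals by integrating the trigonometric expression in $\varphi$ against $f$, integrating by parts to land on the exact differential forms $d(\varphi\sin^{d-1}\varphi)$ and $d(\sin^{d-3}\varphi(2\varphi\sin^2\varphi + 3\cos\varphi\sin\varphi - 3\varphi))$ displayed before the proposition.

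Finally, for the dimension split in the fourth derivative: when $d \ge 4$ the factor $\sin^{d-3}\varphi$ in the primitive vanishes at both endpoints $\varphi = 0,\pi$, so the boundary terms in the integration by parts disappear and the answer is exactly $\beta_d\|Z\|^2$ (times $Z$). When $d = 2$ or $3$ the same primitive no longer vanishes at the antipode $\varphi = \pi$ — equivalently, the $\nabla^4\rho_y$ tensor genuinely blows up like $(\pi-\varphi)^{-1}$ or worse near $\CC_p$ — so to even have $F$ differentiable of order $4$ we are forced (by hypothesis, via Proposition~\ref{p:diffcondprob}(ii)) to have $f$ vanish near $\CC_p$; I would then show that on the support of $f$ the integrand is strictly negative (the trigonometric kernel $\nabla^4\rho_y$ averaged over $\theta$ has a definite sign there because the positive "volume-growth" contribution $\sin^{d-1}\varphi$ cannot outweigh the negativity of the Jacobi-field Hessian correction in low dimension), so the coefficient $C$ is a strictly negative constant.

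The main obstacle I expect is the bookkeeping in the second step: getting the Jacobi-field expression for $\nabla^4\rho_y(p)(Z^{\otimes 4})$ correct as a function of $(\varphi,\theta)$ — including the contribution of the third-order term $\nabla^3\rho_y$ contracted against the geodesic variation field — and then carrying out the $\theta$-average and the integration by parts in $\varphi$ so that the endpoint behavior is transparent. Once that trigonometric identity is in hand, the dimension dichotomy and the sign of $C$ follow by inspecting which powers of $\sin\varphi$ survive at $\varphi = \pi$, which is the conceptual heart of the statement.
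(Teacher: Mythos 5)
Your proposal follows essentially the same route as the paper: use rotational symmetry to reduce to an integral over the colatitude $\varphi$, plug in the Jacobi-field formulas for $\nabla^k\rho_y(p)(Z^{\otimes k})$ as trigonometric functions of $(\varphi,\theta)$, average over $\theta$ (the paper does the odd-order vanishing and the isotropy of the $2$- and $4$-tensors via an explicit reflection argument, which is the same idea), integrate by parts onto the exact differentials defining $\alpha_d$ and $\beta_d$, and check the pointwise negativity of the quartic kernel for $d=2,3$. The plan is correct; the only remaining work is the trigonometric bookkeeping you already flag.
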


\begin{remark} \label{re:not_odoco}
	Note here that the tensor $\nabla ^4F_p$ in Proposition~\ref{prop:derivative_tensors} is not orthogonal decomposable in the sense of assumption \hyperref[cond:A5]{A5}. Indeed, Proposition~\ref{prop:derivative_tensors} shows that $\nabla^4F(Z^{\otimes 4})_p=c\norm{Z(p)}^4 $ for some constant $c$. Thus, there is no orthonormal basis $\{e_1,\ldots, e_d \}$ of $T_{p}M$ such that for $Z(p)=\sum_{i=1}^da_i e_i$,
		\begin{equation*}
			\nabla^4F(Z^{\otimes 4})_p=\sum_{i=1}^d	\lambda_i a_i^4.
		\end{equation*}
\end{remark}

\subsubsection{Central Limit Theorems and the correction map}

The following CLT, which is a direct consequence of Proposition~\ref{prop:derivative_tensors} and \cite[Theorem 2.11]{EH18}, gives us an explicit formula of the correction map $H$ when there is no smeary effect.

\begin{theorem}\label{thm:CLT_classic}
	Suppose that $\muu$ is unique and $\mu$ is absolutely continuous and rotationally symmetric about $\muu$ (Assumption~\ref{assumption:1}). In addition, assume that 
	\begin{itemize} 
		\item[i.]  the function $f$ is bounded on an open neighborhood of the cut locus $\CC_p$; and
		\item[iii.] $\alpha_{d}>0$. 
	\end{itemize} 
With the tangent limit distribution $\NN$ defined in Definition~\ref{defi:tangental_CLT}, the CLT on $S^d$ has the following form
	  		\begin{equation}\label{eq:CLT_classic}
	  				 n^{1/2}\log_{\muu}\muu_n \xrightarrow {\mathcal{D}} H_{\sharp} \NN,
	  		\end{equation}
where the correction map $H$ is defined as
	\begin{equation} \label{eq:correction_inverse_hessian}
		\begin{split}
 			H:T_{\muu}M &\to T_{\muu}M \\
 				Z &\mapsto Z\alpha_d^{-1}.
 		\end{split}
 	\end{equation}
\end{theorem}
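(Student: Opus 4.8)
The plan is to derive Theorem~\ref{thm:CLT_classic} from the general CLT for intrinsic means (Theorem~\ref{thm:CLT_for_intrinsic_means}, that is, \cite[Theorem 2.11]{EH18}): I will check that assumptions \hyperref[cond:A1]{A1}--\hyperref[cond:A4]{A4} hold with $r=2$, read off that the degree of smeariness is then $k=r-2=0$, and compute the correction map $H$ explicitly from Proposition~\ref{prop:derivative_tensors}.

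For the hypotheses: \hyperref[cond:A1]{A1} is the standing assumption that $\muu$ is unique. For \hyperref[cond:A2]{A2}, on $S^d$ the cut locus $\CC_{\muu}$ is the single antipodal point, hence a $V$-null set, so absolute continuity of $\mu$ gives $\mu(\CC_{\muu})=0$. For \hyperref[cond:A3]{A3}, I first invoke Proposition~\ref{p:diffcondprob} with $j=2$: its condition (i) asks for $d\geq 2$, which always holds on $S^d$, and for $f$ to be bounded on an $\epsilon$-ball about $\CC_{\muu}$, which is exactly hypothesis (i) of the present theorem; this yields that $F$ is differentiable of order $2$ on an open neighborhood of $\muu$. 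Then Proposition~\ref{prop:derivative_tensors}, applied at $p=\muu$ (legitimate by the remark following Assumption~\ref{assumption:1}), gives $\nabla F_{\muu}=0$ and $\nabla^2F_{\muu}(Z)=Z\alpha_d$; since $\alpha_d>0$ by hypothesis (iii), $\nabla^2F_{\muu}\neq 0$, so $r=2$ is indeed the order of the first non-vanishing derivative tensor. Finally \hyperref[cond:A4]{A4} holds because $\nabla^2F_{\muu}=\alpha_d\,\id$ is non-degenerate ($\alpha_d>0$), in fact positive definite, putting us in the classical regime.

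With \hyperref[cond:A1]{A1}--\hyperref[cond:A4]{A4} in force, Theorem~\ref{thm:CLT_for_intrinsic_means} gives Eq.~\eqref{eq:new_form_CLT} with $k=0$, so the left-hand side is $n^{1/2}\log_{\muu}\muu_n$. By the second item of the remark after Theorem~\ref{thm:CLT_for_intrinsic_means}, in the classical case $H=(\nabla^2F_{\muu})^{-1}$; inverting $\alpha_d\,\id$ yields $H(Z)=Z\alpha_d^{-1}$, which is Eq.~\eqref{eq:correction_inverse_hessian}. Since $\log_{\muu}$ is bounded in norm on $S^d$, the pushforward $\log_{\muu,\sharp}\mu$ has finite moments of every order, so the Euclidean CLT on $T_{\muu}S^d$ identifies $\lim_{n\to\infty} n^{1/2}\overline{\log_{\muu,\sharp}\mu_n}$ with the tangent limit distribution $\NN$ of Definition~\ref{defi:tangental_CLT}; hence the right-hand side of Eq.~\eqref{eq:new_form_CLT} is $H_\sharp\NN$, which is Eq.~\eqref{eq:CLT_classic}.

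I expect the proof to be essentially a verification, since the analytic substance sits entirely inside Propositions~\ref{p:diffcondprob} and~\ref{prop:derivative_tensors}. The one place that needs care is \hyperref[cond:A3]{A3}: one must know $F$ is $C^2$ on a whole neighborhood of $\muu$, not merely at $\muu$, which is precisely what Proposition~\ref{p:diffcondprob} delivers --- and this is why the boundedness hypothesis (i) on $f$ near $\CC_{\muu}$ cannot be dropped, since each $\rho_y$ is singular at $\CC_y$, so without a bound on the mass $\mu$ places near the antipode of $\muu$ the derivative tensors of $F$ could blow up as one approaches $\CC_{\muu}$ and the Taylor expansion underpinning \cite[Theorem 2.11]{EH18} would be unavailable. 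A minor secondary check is that the order-$2$ regularity produced by Proposition~\ref{p:diffcondprob} is exactly what assumption \hyperref[cond:A3]{A3}, as used in \cite[Theorem 2.11]{EH18}, requires.
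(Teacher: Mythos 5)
Your proposal is correct and follows essentially the same route as the paper: both reduce to \cite[Theorem 2.11]{EH18} by extracting the second-order Taylor behavior of $F$ at $\muu$ from Proposition~\ref{prop:derivative_tensors}, with Proposition~\ref{p:diffcondprob} supplying the needed differentiability on a neighborhood. You merely make explicit the verification of assumptions \hyperref[cond:A1]{A1}--\hyperref[cond:A4]{A4} that the paper's two-line proof leaves implicit.
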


\begin{proof}
	Thanks to \cite[Theorem 2.11]{EH18}, it suffices to write the Taylor expansion of the Fr\'echet function at $\muu$. It follows from Proposition~\ref{prop:derivative_tensors} that for $q$ close to $\muu$,
	\[ F(q)=F(\muu)+\alpha_d \dd^2(\muu,q)+ o(\dd^2(\muu,q)). \]
	The CLT \eqref{eq:CLT_classic} now follows directly from \cite[Theorem 2.11]{EH18}.
\end{proof}

The second consequence of Proposition~\ref{prop:derivative_tensors} is that under some certain restriction, the smeary effect does not occur on spheres of dimension $2$ and $3$. 

\begin{proposition} \label{prop:behavior}
 Suppose that $\mu$ satisfies Assumption~\ref{assumption:1}, then the following claims hold true.
	 \begin{enumerate}
	 	\item If $f$ is bounded on $B_{\epsilon}(\CC_p)$ for some $\epsilon>0$ and $\alpha_d>0$ then $p$ is a local minimum of the Fr\'echet function $F$.
	 	\item If $d<4$ and $f$ vanishes on $B_{\epsilon}(\CC_p)$ for some $\epsilon>0$ and $\alpha_d=0$ then $p$ is a local maximum of the Fr\'echet function.
	 \end{enumerate}
\end{proposition}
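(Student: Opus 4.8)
The plan is to deduce everything from the Taylor expansion of $F$ at $p$ that Proposition~\ref{prop:derivative_tensors} provides, so the proof is essentially a ``local analysis of a function with known jet'' argument. First I would verify that the hypotheses of Proposition~\ref{prop:derivative_tensors} are met in each case. For claim~(1), boundedness of $f$ on $B_\epsilon(\CC_p)$ gives, via Proposition~\ref{p:diffcondprob}~(i) with $j=4$ (valid since we may as well assume $d \ge 4$; for $d=2,3$ one uses $j=2$, which also suffices here since only the Hessian is needed), that $F$ is differentiable of order~$2$ in a neighborhood of $p$; then Proposition~\ref{prop:derivative_tensors} yields $\nabla F_p = 0$ and $\nabla^2 F_p(Z,Z) = \alpha_d \norm{Z}^2$. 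For claim~(2), vanishing of $f$ on $B_\epsilon(\CC_p)$ with $d<4$ gives, via Proposition~\ref{p:diffcondprob}~(ii) with $j=4$, that $F$ is $C^4$ near~$p$, so all four derivative tensors of Proposition~\ref{prop:derivative_tensors} are available.

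For claim~(1), in normal coordinates centered at $p$ write $q = \exp_p Z$ with $\norm{Z}$ small. Taylor's theorem together with $\nabla F_p = 0$ and $\nabla^2 F_p(Z^{\otimes 2}) = \alpha_d\norm{Z}^2$ gives
\begin{equation*}
F(q) = F(p) + \tfrac{1}{2}\alpha_d \norm{Z}^2 + o(\norm{Z}^2).
\end{equation*}
Since $\alpha_d > 0$, the quadratic term dominates for $\norm{Z}$ sufficiently small, so $F(q) > F(p)$ for all $q \neq p$ in some punctured neighborhood; hence $p$ is a strict local minimum. The only mild subtlety is to make sure the $o(\norm{Z}^2)$ error is uniform in the direction of $Z$, which follows because $F$ is $C^2$ and the unit sphere in $T_p M$ is compact.

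For claim~(2), the situation is the degenerate one: $\alpha_d = 0$ kills the Hessian, and since $\nabla^3 F_p = 0$ the first potentially nonvanishing term is the fourth-order one. By the $d<4$ branch of Proposition~\ref{prop:derivative_tensors}, $\nabla^4 F_p(Z^{\otimes 4}) = C\norm{Z}^4$ with $C<0$. Taylor expansion to fourth order then gives
\begin{equation*}
F(q) = F(p) + \tfrac{1}{24}C\norm{Z}^4 + o(\norm{Z}^4),
\end{equation*}
and since $C<0$ the quartic term is strictly negative and dominates, so $F(q) < F(p)$ for $q$ near $p$, $q\neq p$; thus $p$ is a strict local maximum. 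Again one needs uniformity of the remainder over directions, guaranteed by $F \in C^4$ and compactness of the unit sphere in $T_p M$.

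The main obstacle — really the only non-bookkeeping point — is justifying that the pointwise derivative-tensor formulas of Proposition~\ref{prop:derivative_tensors}, which are identities for the full symmetric tensors $\nabla^k F_p$ evaluated on $Z^{\otimes k}$, legitimately feed into a genuine Taylor expansion with controlled remainder; this is exactly where the differentiability hypotheses from Proposition~\ref{p:diffcondprob} are needed, and where one must be careful that ``differentiable of order $j$'' is strong enough (e.g.\ $C^j$, or at least $j$-times differentiable with the $j$-th differential continuous at $p$) to invoke Taylor's theorem with a $o(\norm{Z}^j)$ error. Granting that — which the earlier results are designed to supply — the rest is the elementary observation that a function whose low-order jet at a point is a definite even-degree form has a corresponding local extremum there.
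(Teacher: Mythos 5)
Your proposal is correct and follows essentially the same route as the paper: establish criticality from symmetry, invoke Proposition~\ref{p:diffcondprob} for the needed order of differentiability, and then read off the sign of the first nonvanishing derivative tensor from Proposition~\ref{prop:derivative_tensors} (the paper phrases the second claim as a restriction of $F$ to geodesics plus rotational symmetry, which is the same Taylor-expansion argument in slightly different clothing). Your extra care about uniformity of the remainder over directions is a reasonable refinement of what the paper leaves implicit.
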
\label{prop:behavior}
\begin{proof}
	First note that the rotational symmetry of $\mu$ implies that $\nabla F_p =0$ and hence $p$ is a critical point of $F$.
	
	Suppose that the conditions in claim 1 hold. The boundedness of $f$ on $B_{\epsilon}(\CC_p)$ ensures that the Fr\'echet function has derivative up to order $2$ (Proposition~\ref{p:diffcondprob}). It then follows from Eq.~\eqref{2nddervre} that $\alpha_d>0$ implies $\nabla^2F(Z,Z)_p>0$ for all $Z \in \XX(S^d)$. Thus, $p$ is a local minimum of $F$.
	
	Suppose that the conditions in the second claim hold. Proposition~\ref{p:diffcondprob} tells us that the Fr\'echet function has derivative up to order $4$. It follows from Eq.~\eqref{3rddervfin}, Eq.~\eqref{fourthdervre}, and $\alpha_d=0$ that the restriction of the function $F$ to the geodesic tangent to $Z(p)$ has a local maximum at $p$. Since $Z$ can be chosen freely and $F$ is rotationally symmetric, $p$ is a local maximum of $F$.
\end{proof}

\begin{corollary} \label{cor:smeary}
	In dimensions $d=2$ or $3$, suppose that $\muu$ is unique and $\mu$ is absolutely continuous and rotationally symmetric about $\muu$ (Assumption~\ref{assumption:1}). In addition, $\mu$ gives zero mass to an open neighborhood of the cut locus of $\muu$. Then there is no smeary effect on the unit sphere $S^d$. 
\end{corollary}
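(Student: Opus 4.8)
The plan is to reduce ``no smeary effect'' to the single statement that the Hessian $\nabla^2 F_{\muu}$ is non-degenerate. Indeed, by the remark following Theorem~\ref{thm:CLT_for_intrinsic_means}, the degree of smeariness is $k=r-2$, where $r$ is the order of the first non-vanishing derivative tensor of $F$ at $\muu$, and a smeary effect is present exactly when $k>0$; since rotational symmetry gives $\nabla F_{\muu}=0$ so that $r\ge 2$ always, it is enough to show $r=2$, i.e.\ that $\nabla^2 F_{\muu}$ does not vanish (being at a minimum, it is then positive definite).

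First I would verify the differentiability needed to invoke Proposition~\ref{prop:derivative_tensors}. Since $\mu$ is absolutely continuous with distribution function $f$ and gives zero mass to an open neighborhood of $\CC_{\muu}$, the function $f$ vanishes on some ball $B_{\epsilon}(\CC_{\muu})$; as a vanishing $f$ is in particular bounded, for every $j\le 4$ either condition~(i) (when $d\ge j$) or condition~(ii) (when $d<j$) of Proposition~\ref{p:diffcondprob} holds, so $F$ is differentiable of order $4$ on a neighborhood of $\muu$. Proposition~\ref{prop:derivative_tensors} then yields $\nabla F_{\muu}=0$, $\nabla^2 F(Z)_{\muu}=Z\alpha_d$, $\nabla^3 F(Z,Z)_{\muu}=0$, and, because $d\in\{2,3\}<4$, $\nabla^4 F(Z,Z,Z)_{\muu}=Z\norm{Z}^2 C$ with $C<0$.

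The crux --- and the only step that is not bookkeeping --- is to rule out $\alpha_d=0$. Because $\muu$ is the unique Fr\'echet mean, it is a global, hence local, minimum of $F$. If $\alpha_d<0$, then $\nabla^2 F_{\muu}=\alpha_d\,\id$ is negative definite, making $\muu$ a strict local maximum, a contradiction. If $\alpha_d=0$, then the data $d<4$, $f$ vanishing near $\CC_{\muu}$, and $\alpha_d=0$ are precisely the hypotheses of Proposition~\ref{prop:behavior}(2), so $\muu$ is a local maximum of $F$; being simultaneously a local minimum and a local maximum forces $F$ to be constant on a neighborhood of $\muu$, whence every point of that neighborhood minimizes $F$ and uniqueness of $\muu$ fails. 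Hence $\alpha_d>0$.

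With $\alpha_d>0$ the Hessian $\nabla^2 F_{\muu}=\alpha_d\,\id$ is positive definite, so $r=2$ and $k=0$: no smeary effect. Equivalently, since $f$ is also bounded near $\CC_{\muu}$, Theorem~\ref{thm:CLT_classic} applies directly and gives the classical $n^{1/2}$ CLT with correction map $H=\alpha_d^{-1}\,\id$. I expect the excluded-case analysis for $\alpha_d=0$ to be the main obstacle, as it is where the interplay between uniqueness of the mean, the vanishing Hessian, and the \emph{negativity} of $\nabla^4 F_{\muu}$ in low dimension is genuinely used.
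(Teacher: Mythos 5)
Your proof is correct and follows the route the paper intends: the corollary is stated as an immediate consequence of Proposition~\ref{p:diffcondprob}, Proposition~\ref{prop:derivative_tensors}, and Proposition~\ref{prop:behavior}(2), and your argument assembles exactly these pieces, with the dichotomy on $\alpha_d$ (negative, zero, positive) and the contradiction with uniqueness of the global minimum spelled out more explicitly than the paper does.
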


When $d \geq 4$, there are scenarios in which $\alpha_d$ vanishes and $\beta_d$ is positive---that is, when the CLT \eqref{eq:new_form_CLT} has degree of smeariness $k=2$. From Proposition~\ref{prop:derivative_tensors}, the map $\tau$ in Eq.~\eqref{eq:tau_map} is defined as 
	\begin{equation}
		\begin{split}
			\tau:T_{\muu}M &\to T_{\muu}M\\
				Z &\mapsto  \frac{\beta_d}{6} Z \norm{Z}^2.
		\end{split}	
	\end{equation}
The correction map $H$ is the inverse of $\tau$, which is
		\begin{equation}\label{eq:correction_smeary}
		\begin{split} 
			H: T_{\muu}S^d &\to T_{\muu}S^d\\
					Z &\mapsto  Z\norm{Z}^{-2/3}\beta_d^{-1/3} 6^{1/3}.
		\end{split}
		\end{equation}

\begin{theorem} \label{thm:smearyCLT}
	Given $d\geq 4$, suppose that $\muu$ is unique and $\mu$ is absolutely continuous and rotationally symmetric about $\muu$ (Assumption~\ref{assumption:1}). In addition, assume the following conditions
	\begin{itemize}
		\item[i.] $f$ is bounded in an open neighborhood of the cut locus $\CC_{\muu}$; \label{cond:smeary_1}
		\item[ii.] $\alpha_{d}=0$; and \label{cond:smeary_2}
		\item[iii.] $\beta_d >0.$ \label{cond:smeary_3}
	\end{itemize} 
With the correction map defined in Eq.~\eqref{eq:correction_smeary} and the tangental limit distribution $\NN$ defined in Definition~\ref{defi:tangental_CLT}, we have the following $2$-smeary CLT
	\[ n^{1/6}\log_{\muu}\muu_n \xrightarrow {\mathcal{D}} H_{\sharp} \mathcal{N}.\]
\end{theorem}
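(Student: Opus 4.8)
The plan is to deduce Theorem~\ref{thm:smearyCLT} directly from the general smeary CLT of \cite{EH18} (restated here as Theorem~\ref{thm:CLT_for_intrinsic_means}) by verifying assumptions \hyperref[cond:A1]{A1}--\hyperref[cond:A4]{A4} with $r=4$ (so degree of smeariness $k=r-2=2$) and then identifying the correction map. First I would record that \hyperref[cond:A1]{A1} is exactly the hypothesis that $\muu$ is unique. Assumption \hyperref[cond:A2]{A2} holds because $\mu$ is absolutely continuous with respect to the volume measure and $\CC_{\muu}$ (the single antipodal point of $\muu$ on $S^d$) has volume zero; more than that, condition~(i)---boundedness of $f$ near $\CC_{\muu}$---together with Proposition~\ref{p:diffcondprob} (applied with $j=4$, using $d\ge 4$) gives that $F$ is differentiable of order $4$ in a neighborhood $U$ of $\muu$. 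Then I invoke Proposition~\ref{prop:derivative_tensors}: rotational symmetry forces $\nabla F_{\muu}=0$ and $\nabla^3 F_{\muu}=0$, condition~(ii) $\alpha_d=0$ forces $\nabla^2 F_{\muu}=0$, and condition~(iii) $\beta_d>0$ gives $\nabla^4 F_{\muu}(Z^{\otimes 4})=\beta_d\norm{Z}^4>0$ for $Z\neq 0$. Hence $\nabla^4 F$ is the first non-zero derivative tensor at $\muu$, establishing \hyperref[cond:A3]{A3} with $r=4$, and it is non-degenerate since $\beta_d\norm{Z}^4=0$ only for $Z=0$, establishing \hyperref[cond:A4]{A4}.

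Having checked the hypotheses, the next step is to compute the map $\tau$ of Eq.~\eqref{eq:tau_map} and invert it. With $r=4$ and $\nabla^4 F_{\muu}(Z,Z,Z)=\beta_d Z\norm{Z}^2$ (the vector-valued third derivative from Proposition~\ref{prop:derivative_tensors}), we get
\begin{equation*}
	\tau(Z)=\frac{1}{3!}\nabla^4 F_{\muu}(Z^{\otimes 3})=\frac{\beta_d}{6}\,Z\norm{Z}^2,
\end{equation*}
which is precisely the map displayed in the excerpt preceding the theorem. Since $\tau$ scales radially by the strictly increasing factor $t\mapsto \frac{\beta_d}{6}t^2$ on the ray of length $t$, it is a bijection of $T_{\muu}S^d$ onto itself, and solving $\tau(W)=Z$ for $W$ parallel to $Z$ gives $\norm{W}=(6\norm{Z}/\beta_d)^{1/3}$, hence
\begin{equation*}
	H(Z)=\tau^{-1}(Z)=Z\norm{Z}^{-2/3}\beta_d^{-1/3}6^{1/3},
\end{equation*}
matching Eq.~\eqref{eq:correction_smeary}. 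Then Theorem~\ref{thm:CLT_for_intrinsic_means} (equivalently \cite[Theorem 2.11]{EH18}) applied with $k=2$ yields
\begin{equation*}
	n^{1/(2k+2)}\log_{\muu}\muu_n = n^{1/6}\log_{\muu}\muu_n \xrightarrow{\mathcal{D}} H_{\sharp}\NN,
\end{equation*}
with $\NN$ the tangent limit distribution of Definition~\ref{defi:tangental_CLT}, which is the claimed CLT.

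I expect the main subtlety to be confirming that the hypotheses of \cite[Theorem 2.11]{EH18} are met in exactly the form needed---in particular that the differentiability of $F$ up to order $r=4$ guaranteed by Proposition~\ref{p:diffcondprob} under condition~(i) is the same regularity required there, and that no residual integrability or moment condition on $\log_{\muu,\sharp}\mu$ is being silently used. A second point requiring a word of care is that \cite[Theorem 2.11]{EH18} as originally stated may assume the orthogonal-decomposability condition \hyperref[cond:A5]{A5} to produce an \emph{explicit} $H$; here $\nabla^4 F_{\muu}$ is \emph{not} orthogonally decomposable (Remark~\ref{re:not_odoco}), so I would instead use the coordinate-free description of $H$ via inversion of $\tau$ (Eq.~\eqref{eq:tau_map}), or equivalently the characterization of $H$ through the influence-function limit in Eq.~\eqref{e:chap3_inverse_hess}, both of which apply without \hyperref[cond:A5]{A5}. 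Once that identification is in place, the remainder is the routine substitution $k=2$, $r=4$ into Eq.~\eqref{eq:new_form_CLT}, so the argument is short.
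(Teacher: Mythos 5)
Your proposal is correct and follows essentially the same route as the paper: both reduce to the general smeary CLT of \cite{EH18} with $r=4$ and $k=2$, use Proposition~\ref{prop:derivative_tensors} to identify $\tau(Z)=\tfrac{\beta_d}{6}Z\norm{Z}^2$, and invert it to obtain the correction map $H$ of Eq.~\eqref{eq:correction_smeary}. The only presentational difference is that the paper verifies the leading behavior via an explicit Taylor expansion of $F$ and $\nabla F$ at $\muu$ rather than your checklist of assumptions \hyperref[cond:A1]{A1}--\hyperref[cond:A4]{A4}, and your remark that \hyperref[cond:A5]{A5} fails here (so $H$ must be obtained by direct inversion of $\tau$) makes explicit a point the paper handles only implicitly.
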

\begin{proof}
Set $Z_n=\log_{\muu}\muu_n$. Then Proposition~\ref{prop:derivative_tensors} implies 
\begin{align} \label{6.2.2}
	\begin{split}
		\nabla^3F(Z_n,Z_n)&=0,\\
		\nabla^4F(Z_n,Z_n,Z_n)&=Z_n\norm{Z_n}^2\beta_d,
	\end{split}	
\end{align}
The Taylor expansion of $F$ at $\muu$ along $Z_n$ gives
\begin{equation} \label{eq:taylor1}
	\begin{split}
		F(\muu_n)&=F(\muu)+\nabla F(Z_n)+\frac{1}{2}\nabla^2F(Z_n,Z_n)+\frac{1}{6}\nabla^3F(Z_n,Z_n,Z_n)\\
		&\hspace{1em}+ \frac{1}{24}\nabla^4F(Z_n,Z_n,Z_n,Z_n) + o(\norm{Z_n}^4),
	\end{split}
\end{equation}
which is simplified to
\begin{align}\label{4th_der_symm}
	\begin{split}
		F(\muu_n)=F(\muu)+\frac{1}{24}\norm{Z_n}^4\beta_n+o(\norm{Z_n}^4). 
	\end{split}	
\end{align}
So
	\begin{equation*}
		\nabla F (\muu_n)=\nabla F (\muu)+\frac{1}{6}\nabla^4F(Z_n,Z_n,Z_n)+o(\norm{Z_n}^3),
	\end{equation*}
which, in conjunction with Proposition~\ref{prop:derivative_tensors}, yields
	\begin{equation} \label{eq:taylor_of_gradient}
		\nabla F (\muu_n)=\frac{1}{6}Z_n\norm{Z_n}^2\beta_d+o(\norm{Z_n}^3).
	\end{equation}
Observe that when $\beta_d \neq 0$, the map
		\begin{align*}
			\tau: T_{\muu}S^d &\to T_{\muu}S^d\\
					Z &\mapsto \frac{1}{6}Z\norm{Z}^2\beta_d
		\end{align*}
	has an inverse
		\begin{equation}\label{e:chap3_inverse_4th_derv}
		\begin{split} 
			H: T_{\muu}S^d &\to T_{\muu}S^d\\
					Z &\mapsto  Z\norm{Z}^{-2/3}\beta_d^{-1/3} 6^{1/3}.
		\end{split}
		\end{equation}

Now we apply Eqs.~\eqref{eq:taylor_of_gradient} and \eqref{e:chap3_inverse_4th_derv} to Theorem~\ref{thm:CLT_for_intrinsic_means} with $k=2$ and the correction map defined in Eq.~\eqref{e:chap3_inverse_4th_derv} to obtain a $2$-smeary CLT
 		\begin{equation*}
 			 n^{1/6}\log_{\muu}\muu_n \xrightarrow {\mathcal{D}} H_{\sharp}\mathcal{N}. 
 		\end{equation*}
 \vglue -2em
 \end{proof}
 
 \begin{remark}
At least for the case of constant sectional curvature, we showed from Corollary~\ref{cor:smeary} and Theorem~\ref{thm:smearyCLT} that the occurrence of the smeary effect depends not only on curvature but also on the dimension of the manifold, as long as its sectional curvature is positive.
\end{remark}
Conditions $\alpha_d=0$ and $\beta_d>0$ are not hard to achieve,
especially in high dimensions. Example~\ref{e:smeary} below exibits a piecewise constant function $f$ such that conditions \hyperref[cond:smeary_1]{(\emph{i})}, \hyperref[cond:smeary_2]{(\emph{ii})}, and \hyperref[cond:smeary_3]{(\emph{iii})} in Theorem~\ref{thm:smearyCLT} hold. However, in the example, we have to make use of assumption A1 since we are unable to verify the uniqueness of the mean. Examples of global Fr\'echet means with rotationally symmetric distribution and a singular mass at the mean are presented in \cite{Elt19}.

\begin{expl}\label{e:smeary}
We need the following conditions on $f(\varphi),$
\[ \int_0^{\pi}f(\varphi)d(\varphi \sin^{d-1}\varphi)=0, \text{ and }\]
\[ \int_0^{\pi}f(\varphi)d\Big(\sin^{n-3}\varphi \big(2\varphi\sin^2\varphi+3\cos \varphi \sin \varphi-3\varphi \big) \Big)>0.\]
These are equivalent to
\begin{equation} \label{expl5.6.1}
		\int_0^{\pi}f(\varphi)d(\varphi \sin^{d-1}\varphi)=0,
\end{equation}
\begin{equation} \label{expl5.6.2}
	\int_0^{\pi}f(\varphi)d\Big(\sin^{d-3}\varphi \cos \varphi \big(\sin \varphi-\varphi \cos \varphi \big) \Big)>0.
\end{equation}
Set $g_1(\varphi)=\varphi \sin^{d-1}\varphi$ and $g_2(\varphi)=\sin^{d-3}\varphi \cos \varphi \big(\sin \varphi-\varphi \cos \varphi \big)$. Note that $g_2$ is positive on $(0,\pi/2)$. The graphs of $g_2$ with $d=10$ are depicted below.
\begin{figure*}[h!]
\centering
	\includegraphics[width=0.65\textwidth]{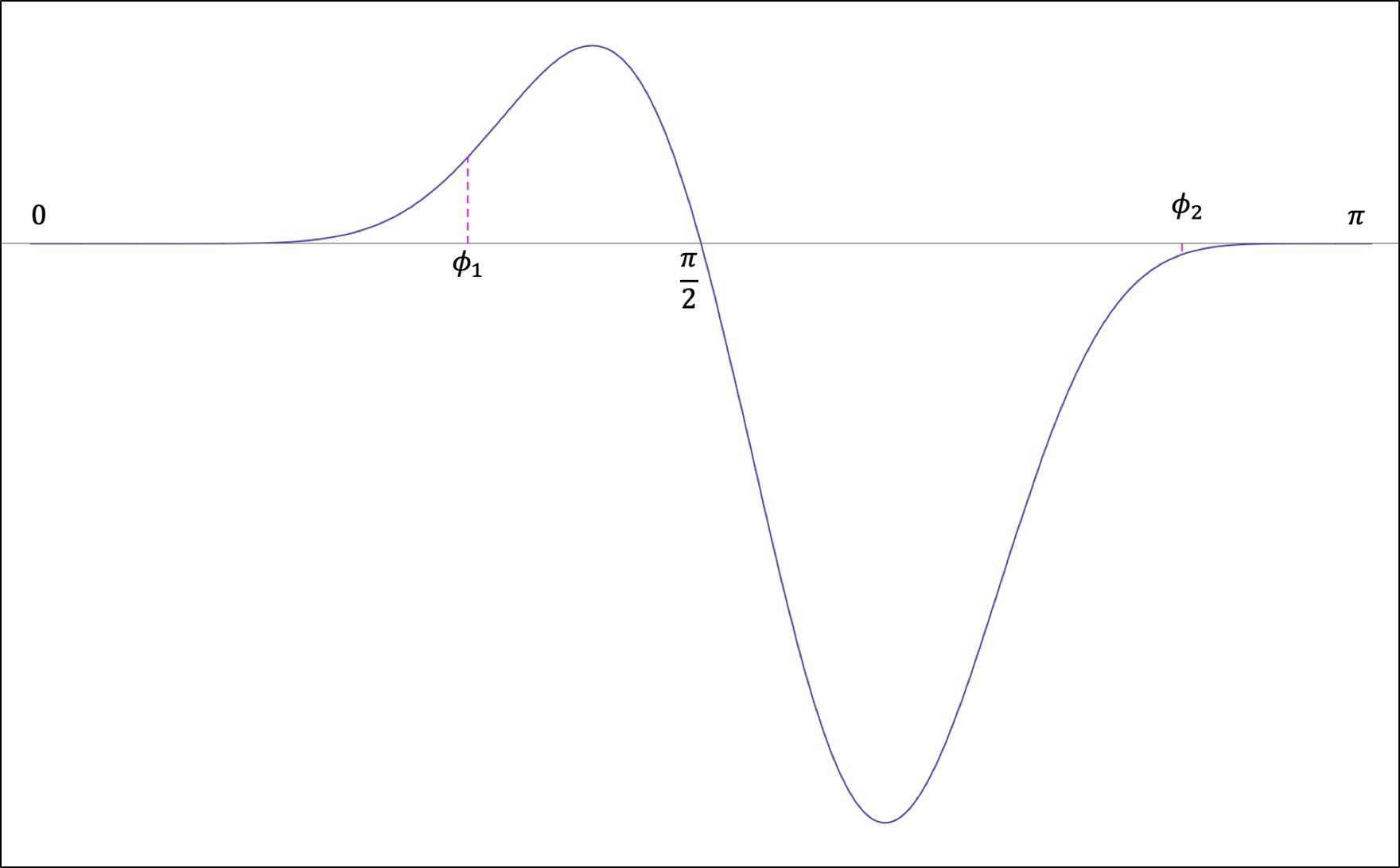}
	\caption{Graph of $g_2$ with $d=10:g_2(\phi_1)+g_2(\phi_2)>0$}
\end{figure*}

Let $\phi_1 \in (0,\pi/2)$ and $\phi_2 =\pi-\epsilon$ for some small $\epsilon$. Suppose that $f(\varphi)$ is given by the following formula, cf.~Figure~\ref{expl:smearysphere}:
\[f(\varphi)=
\begin{cases}
	c_1 \text{ if }\varphi \in [0,\phi_1],\\
	c_2 \text{ if } \varphi \in [\pi/2,\phi_2],\\
	0 \text{ otherwise.}
\end{cases}
\]
Then Eqs.~\eqref{expl5.6.1} and \eqref{expl5.6.2} are equivalent to
\begin{align} \label{condt1}
	\begin{split}
		c_1g_1(\phi_1 )+c_2g_1(\phi_2)-c_2\frac{\pi}{2} &=0,\\
		g_2(\phi_1)+g_2(\phi_2) &>0.
	\end{split}
\end{align}
\end{expl}
\begin{figure*}[h!]
\centering
	\includegraphics[width=0.5\textwidth]{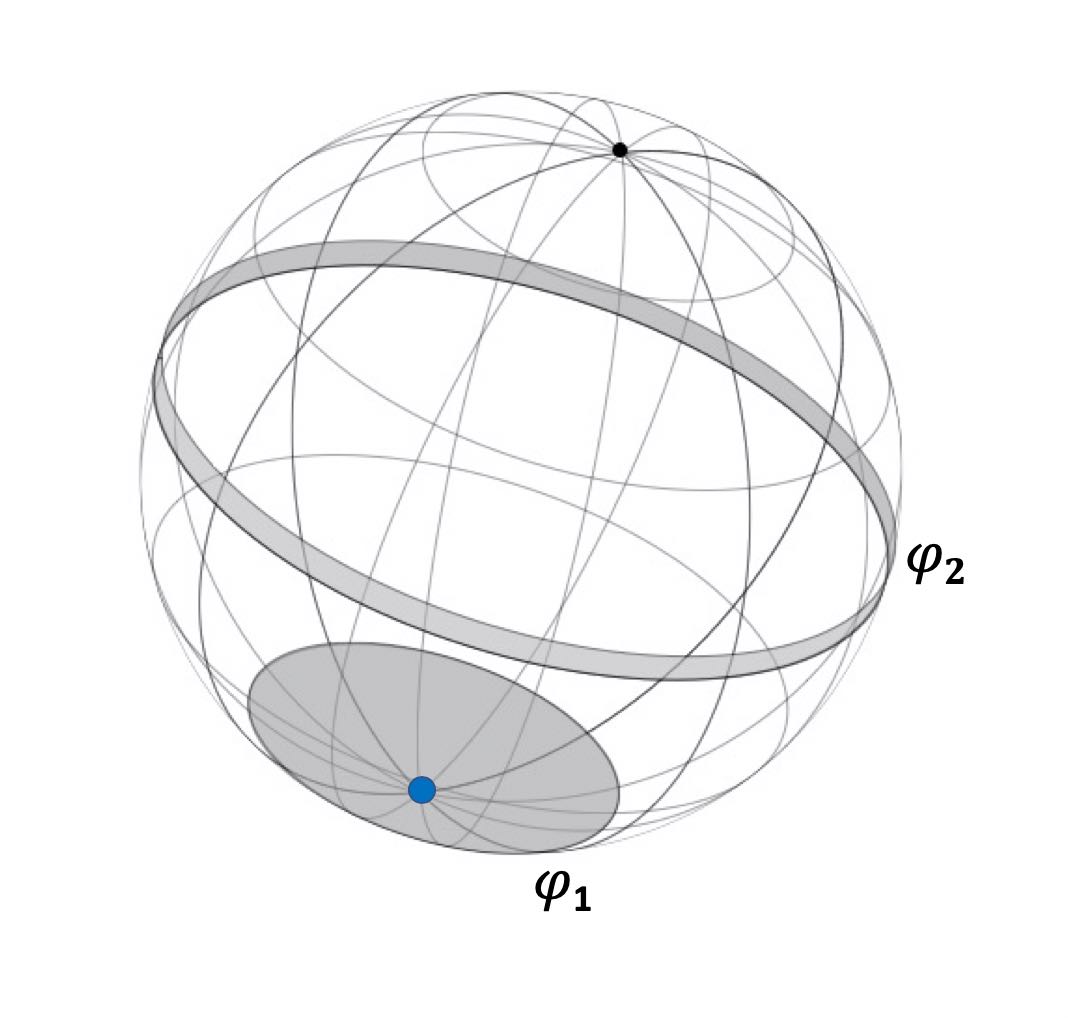}
	\vglue -0.2in
	\caption{An example of smeary CLT on spheres: the upper strip gets thinner as the dimension grows.}
	 \label{expl:smearysphere}
\end{figure*}

The second condition can be obtained by choosing $\epsilon$ sufficiently small. For example, select choose $\epsilon$ such that
\[ \bigg(\frac{\sin \phi_1}{\sin \epsilon}\bigg)^{d-3}=\frac{\pi}{\cos \phi_1(\sin \phi_1-\phi_1 \cos \phi_1)},\]
or 
\begin{equation} \label{epsiloncondt}
	\epsilon =\arcsin \Big(\sin \phi_1 \sqrt[d-3]{\cos \phi_1(\sin \phi_1-\phi_1 \cos \phi_1)} \Big).
\end{equation} 
The first condition in Eq.~\eqref{condt1} is equivalent to
\begin{equation} \label{condt2}
	\frac{c_1}{c_2}=\frac{\pi-2\epsilon\sin^{d-1}\epsilon}{2\phi_1 \sin^{d-1}\phi_1}. 
\end{equation}

To illustrate, the measure $\mu$ is uniformly distributed on a lower cup
from the south pole to the longitude of $\phi_1$ with a constant
distribution function $f_1=c_1$ and is uniformly distributed on a
upper strip from the equator to the longitude of $\pi-\epsilon$ with a
constant distribution function $f_2=c_2$.

\begin{remark} \label{r:smeary} 
 It follows from Eq.~\eqref{epsiloncondt} that if we fix $\phi_1$ then $ \epsilon \to \phi_1 $ and so $\phi_2 \to \pi-\phi_1$ $\text{ as } d \to \infty$.
	It means that in high dimension, smeary effect can happen with a
	uniformly distributed cup in the bottom and a uniformly distributed
	thin upper strip near the equator, cf.~Figure~\ref{expl:smearysphere}. For example, the case $\phi_1=0$ is studied in \cite{Elt19}. In the paper, Eltzner shows that $p$ is a global Fr\'echet mean for some $\phi_2$ and $\phi_2$ converges to $\pi/2$ as the dimension $d$ grows.
\end{remark}


\section{Derivative tensors of the squared distance function} \label{s:derv_tensors}

In this section, derivative tensors of the squared distance function are computed formally. The Hessian of the squared distance function has been computed by applying Jacobi fields; see, for example, \cite{Afs09, BB08}. However, earlier computations such as \cite{Afs09, BB08} used a parametric approach which is difficult to apply in computing higher derivatives of the Fr\'echet function. Here we present a non-parametric approach. While the technique is also standard in differential geometry, its result allows us to compute higher derivatives of the Fr\'echet function at the mean.

\subsection{The squared distance function and Jacobi fields}
Let $\mathcal{X}(M)$ be the space of vector fields on $M$ and $\mathcal{D}(M)$ be the space of real valued functions on $M$ that are of class $C^{\infty}$ on $M \setminus \CC_y$. Let $Y$ denote the vector field $\nabla \rho_y$ on $M \setminus \CC_y$. Then for any point $x \in M \setminus \CC_y$ the vector $(-Y(x))\in T_xM$ is the tangent vector of the geodesic $\exp_x(-tY(x))$ connecting $x$ and $y$. 

Recall that the Hessian $\nabla^2 \rho_y$ at a point $x$ is a linear map $\nabla^2 \rho_y(x):T_xM \to T_xM$, which we write as $\nabla^2 \rho_y$ when $x$ is clear from the text, defined by the identity
		\[ \nabla^2\rho_y \cdot v =\nabla_v (\nabla \rho_y) =\nabla_v Y  \]
	for $v \in T_xM$. If $Z$ is a vector field in $M$ with $Z(x)=v$ and $\nabla_Z$ stands for the covariant derivative in direction $Z$ then
		\[ \nabla^2 \rho_y \cdot v = \nabla_v Y =\nabla_ZY(x). \]
	We wish to compute $\nabla_ZY$ at $x$
for any vector field $Z \in \mathcal{X}(M)$. If no confusion can arise, we write $\<Y,Z \>$ for $\mathfrak{g}(Y,Z)$ and $\norm{Y}$ for $\norm{Y(x)}$. Define
		\begin{equation} \label{e:g_k}
			\begin{split}
				g_{y}: M &\to \RR\\
					x &\mapsto 	\dd(x,y)\cot (\dd(x,y)) 
			\end{split}
		\end{equation}
The following result and its proof is familiar; see for example \cite[Theorem 2.4.1]{Afs09}, \cite[Theorem 2.2]{BB08}.
 \begin{proposition} \label{prop:2ndderv}
 The Hessian of $\rho_y$ is given by 
 	\begin{align} \label{2ndderv}
 		\begin{split}
 			\nabla^2\rho_y(Z)=\nabla_{Z}Y=Zg_{y}-Y\frac{\< Y,Z\>}{\norm{Y}^2}\Big(g_{y}-1 \Big),
 		\end{split}
 	\end{align}  
note that we omit the variable $x$ in the expressions of $Z(x)$, $g_{y}(x)$, and $Y(x)$ in the above formula.  
\end{proposition}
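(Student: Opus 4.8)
The plan is to compute $\nabla_Z Y$ at a point $x \notin \CC_y$ by exploiting the fact that $Y = \nabla \rho_y$ is, up to scaling, the unit tangent field of the geodesics emanating from $y$, and that the behavior of such a field under covariant differentiation is governed by Jacobi fields. First I would set $\ell = \dd(x,y) = \norm{Y(x)}$ and note that $-Y/\ell$ is the unit vector at $x$ pointing back along the minimizing geodesic $\gamma$ to $y$. Decompose an arbitrary test vector $v = Z(x) \in T_x M$ orthogonally as $v = v^{\parallel} + v^{\perp}$, where $v^{\parallel} = \frac{\<Y,v\>}{\norm{Y}^2} Y$ is the component along $Y$ and $v^{\perp}$ is orthogonal to $Y$. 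By bilinearity of $\nabla^2\rho_y$ it suffices to evaluate $\nabla_v Y$ separately on these two pieces.

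For the radial component, the key observation is that $\rho_y$ restricted to the geodesic through $y$ and $x$ is just $\tfrac12 s^2$ in arclength $s$, so along that geodesic $\nabla_Y Y = Y$; equivalently $\nabla_{v^{\parallel}} Y = v^{\parallel}$. Writing $g_y(x) = \ell \cot \ell$, one checks that $v^{\parallel} = \frac{\<Y,v\>}{\norm{Y}^2}Y$ is reproduced exactly by the claimed formula \eqref{2ndderv} when $v = v^{\parallel}$: the $Zg_y$ term contributes $g_y \, v^{\parallel}$ and the correction term contributes $-(g_y - 1) v^{\parallel}$, summing to $v^{\parallel}$. So the radial part matches with no curvature input. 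For the tangential component $v^{\perp}$, I would invoke the standard Jacobi field computation: extend $v^{\perp}$ to a Jacobi field $J$ along $\gamma$ with $J(y) = 0$ and $J(x) = v^{\perp}$, so that $\nabla_{v^{\perp}} Y = \frac{1}{\ell}\frac{\nabla J}{ds}\big|_x$ by the first variation of the radial geodesic. On the unit sphere $S^d$ the perpendicular Jacobi fields vanishing at $y$ are $J(s) = \frac{\sin s}{\sin \ell} E(s)$ for a parallel field $E$ with $E(x) = v^{\perp}$; differentiating gives $\frac{\nabla J}{ds}\big|_x = \cot \ell \, v^{\perp}$, hence $\nabla_{v^{\perp}} Y = \frac{\ell\cot\ell}{\ell}\cdot\ell^{-1}\cdots$ — more carefully, $\nabla_{v^\perp}Y = (\ell\cot\ell)\,\ell^{-1}\cdot\ell\, v^\perp/\ell = g_y \cdot v^{\perp}/1$, i.e. $\nabla_{v^\perp}Y = g_y\, v^\perp$. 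Since $v^{\perp}$ is orthogonal to $Y$, the correction term in \eqref{2ndderv} vanishes on it, and the $Zg_y$ term gives exactly $g_y v^{\perp}$, so again the formula matches.

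Adding the two contributions, $\nabla_v Y = v^{\parallel} + g_y v^{\perp} = g_y v - (g_y - 1) v^{\parallel} = g_y v - (g_y-1)\frac{\<Y,v\>}{\norm{Y}^2}Y$, which is precisely \eqref{2ndderv}; symmetry of the Hessian is visible from the final symmetric expression. Finally I would remark that $\nabla^2\rho_y \in \DD(M)$ and that the formula is smooth on $M \setminus \CC_y$ since $g_y$ and $Y$ are, so the identity holds as an equality of $C^\infty$ tensor fields there.

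The main obstacle is setting up the Jacobi field argument cleanly in the non-parametric language: one must justify that for a vector field $Z$ with $Z(x) = v$, the quantity $\nabla_Z Y$ at $x$ depends only on $v$ (true, since $Y$ is a genuine vector field and $\nabla$ is tensorial in the lower slot), and then correctly relate the covariant derivative $\nabla_v Y$ to the derivative of the Jacobi field along $\gamma$ — the subtlety being the $1/\ell$ factors coming from the difference between the unit-speed parameter $s$ and the gradient field $Y = \ell\cdot(\text{unit vector})$, plus keeping track of the sign since $-Y$ points toward $y$. Everything else — the explicit $\sin s/\sin\ell$ Jacobi fields on $S^d$, the radial identity $\nabla_Y Y = Y$, and the orthogonal bookkeeping — is routine once that dictionary is fixed.
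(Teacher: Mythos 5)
Your strategy is the standard Jacobi-field computation, which is in effect the paper's approach as well: the paper supplies no proof of its own for this proposition, deferring to \cite{Afs09} and \cite{BB08}, and your radial/tangential decomposition, the radial identity $\nabla_{v^{\parallel}}Y=v^{\parallel}$, the explicit perpendicular Jacobi fields $\frac{\sin s}{\sin \ell}E(s)$ on $S^d$, and the final reassembly into $g_y v-(g_y-1)v^{\parallel}$ are all correct and yield exactly \eqref{2ndderv}.

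The one step that needs repair is precisely the dictionary you flag as the main obstacle. As written, $\nabla_{v^{\perp}}Y=\frac{1}{\ell}\frac{\nabla J}{ds}\big|_x$ is off by a factor of $\ell^2$ (it would give $\frac{\cot\ell}{\ell}v^{\perp}$), and the subsequent chain of $\ell$'s in the ``more carefully'' clause evaluates to $\cot\ell\,v^{\perp}$ rather than the $g_y v^{\perp}$ you assert. The clean version: write $r=\dd(y,\cdot)$, so $Y=\nabla\rho_y=r\nabla r$ and $\nabla r$ is the unit radial field. For the variation $(u,s)\mapsto \exp_y(s\,w(u))$ by unit-speed radial geodesics, symmetry of the connection gives $\nabla_{J(\ell)}\nabla r=\frac{\nabla J}{ds}(\ell)$ for the Jacobi field with $J(0)=0$, $J(\ell)=v^{\perp}$. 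Hence
\[
\nabla_{v^{\perp}}Y \;=\; dr(v^{\perp})\,\nabla r+r\,\nabla_{v^{\perp}}\nabla r \;=\; 0+\ell\cdot\frac{\nabla J}{ds}(\ell)\;=\;\ell\cot\ell\; v^{\perp}\;=\;g_y\,v^{\perp},
\]
so the conversion factor between $\nabla_{v^\perp}Y$ and $\frac{\nabla J}{ds}(\ell)$ is $\ell$, not $1/\ell$. With that correction the argument is complete; the rest (tensoriality in the lower slot, smoothness on $M\setminus\CC_y$, symmetry of the resulting bilinear form) is routine as you say.
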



\subsection{Higher order derivatives of the squared distance fucntion}\label{ss:higher_derv}
In this section, we compute derivatives of order $3$ and $4$ of $\rho_y$ in Proposition~\ref{prop:3rdderv} and Proposition~\ref{prop:4thderv}. Following notations of derivative tensors in \cite[Sec. 4.5]{Car92}, we view $\nabla^2\rho_y$ as a $2$-tensor, which allows us to formally compute the derivative tensor of $\rho_y$ up to order $4$. 
\subsubsection{The third derivative tensor}
\begin{proposition}\label{prop:3rdderv}The third derivative tensor $\nabla^3\rho_y$ of $\rho_y$ is 
\begin{align} 
\begin{split} \label{thirdderv}
	\nabla^3\rho_y(W,Z,T)&= W\big(\nabla^2\rho_y(Z,T) \big)-\nabla^2\rho_y(\nabla_{W}Z,T)-\nabla^2\rho_y(Z,\nabla_{W}T)  \\
	&=\big( \< Z,T\>\< Y,W\>+\<W,T\>\< Y,Z\>+\< W,Z\>\< Y,T\>  \big)\frac{g_{y}-g_{y}^2}{\norm{Y}^2}\\
	&\hspace{1em}+\<Y,W\>\<Y,Z\>\<Y,T\>\frac{3g_{y}^2-3g_{y}+\norm{Y}^2}{\norm{Y}^4}-\<Z,T\>\<Y,W\>.
\end{split}
\end{align}
\end{proposition}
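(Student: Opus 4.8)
The plan is to obtain $\nabla^3\rho_y$ by differentiating the Hessian formula \eqref{2ndderv} once more, treating $\nabla^2\rho_y$ as a $(0,2)$-tensor and using the standard identity for the covariant derivative of a tensor, namely
\[
	\nabla^3\rho_y(W,Z,T) = W\big(\nabla^2\rho_y(Z,T)\big) - \nabla^2\rho_y(\nabla_W Z, T) - \nabla^2\rho_y(Z, \nabla_W T),
\]
which is the first line of \eqref{thirdderv} and needs no proof beyond the definition of $\nabla$ on tensors (as in \cite[Sec. 4.5]{Car92}). So the entire content is the computation of the right-hand side. First I would record the building blocks: from Proposition~\ref{prop:2ndderv},
\[
	\nabla^2\rho_y(Z,T) = \<Z,T\>\,g_y - \frac{\<Y,Z\>\<Y,T\>}{\norm{Y}^2}\,(g_y-1),
\]
and I would need the derivatives of the scalar ingredients along $W$: the derivative $W(g_y)$, the derivative $W(\norm{Y}^2) = 2\<\nabla_W Y, Y\>$, and $W\<Y,Z\> = \<\nabla_W Y, Z\> + \<Y, \nabla_W Z\>$ (and similarly with $T$). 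The key observation that makes this tractable is that $\nabla_W Y$ is itself given by \eqref{2ndderv}, so every occurrence of $\nabla_W Y$ can be expanded back into $W$, $Y$, $g_y$, $\norm{Y}$ only, and the $\nabla_W Z$, $\nabla_W T$ terms cancel exactly against the correction terms in the tensor-derivative identity.

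The one genuinely new analytic input is the value of $W(g_y)$. Since $g_y(x) = r\cot r$ with $r = \dd(x,y)$ and $\nabla r = -Y/\norm{Y}$ on $M\setminus\CC_y$ (here $\norm{Y} = \rho_y$'s gradient norm equals $r$, i.e. $\norm{Y} = r$), one computes $\frac{d}{dr}(r\cot r) = \cot r - r\csc^2 r = (g_y - \norm{Y}^2 - g_y^2/\cdots)$; more precisely, using $\cot r = g_y/\norm{Y}$ and $\csc^2 r = 1 + \cot^2 r = 1 + g_y^2/\norm{Y}^2$, we get $W(g_y) = \<\nabla g_y, W\> = -\frac{1}{\norm{Y}}\<Y,W\>\big(\tfrac{g_y}{\norm{Y}} - \norm{Y} - \tfrac{g_y^2}{\norm{Y}}\big) = \<Y,W\>\,\frac{\norm{Y}^2 + g_y^2 - g_y}{\norm{Y}^2}$. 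Wait — I would double-check the sign and the $\norm{Y}^2$ placement carefully against the sphere case, but this is exactly the combination $3g_y^2 - 3g_y + \norm{Y}^2$ that appears (after the three symmetric contributions are added) in the coefficient of $\<Y,W\>\<Y,Z\>\<Y,T\>$ in \eqref{thirdderv}, which is a good consistency check.

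After substituting everything, I would group the resulting terms by their tensorial type: terms with one factor of $Y$ contracted against each of $W,Z,T$ separately — these assemble into the symmetric sum $\big(\<Z,T\>\<Y,W\> + \<W,T\>\<Y,Z\> + \<W,Z\>\<Y,T\>\big)$ with a common scalar coefficient, which should come out to $(g_y - g_y^2)/\norm{Y}^2$; terms with $Y$ contracted against all three, giving the $\<Y,W\>\<Y,Z\>\<Y,T\>$ coefficient; and the leftover $-\<Z,T\>\<Y,W\>$ coming from the $W(g_y)\<Z,T\>$ piece (the $\norm{Y}^2$-part of $W(g_y)$, which is not of fractional type). The main obstacle is purely bookkeeping: keeping the expansion of $\nabla_W Y$ inside $W\<Y,Z\>$, $W\<Y,T\>$, and $W(\norm{Y}^2)$ organized so that the many terms collapse correctly, and making sure the cancellation of the $\nabla_W Z$, $\nabla_W T$ contributions against the correction terms is exact rather than approximate. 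No deep idea is needed beyond \eqref{2ndderv}, the tensor-derivative identity, and the scalar derivative of $r\cot r$; the proof is a careful but routine calculation, and I would present it by displaying $W(g_y)$, $W(\norm{Y}^2)$, $W\<Y,Z\>$ as lem-style intermediate identities and then substituting.
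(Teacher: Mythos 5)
Your plan is exactly the paper's proof: differentiate the Hessian formula \eqref{2ndderv} as a $2$-tensor, feed every occurrence of $\nabla_W Y$ back through \eqref{2ndderv} so that the $\nabla_W Z$, $\nabla_W T$ terms cancel against the correction terms, and supply the one new scalar ingredient, namely the derivative of $g_y=\dd_y\cot\dd_y$ along $W$. The only thing to fix is the sign you yourself flagged: since $Y=\nabla\rho_y=\dd_y\,\nabla\dd_y$ points \emph{away} from $y$, one has $\nabla\dd_y=+Y/\norm{Y}$, not $-Y/\norm{Y}$; combined with the identity $t(t\cot t)'=t\cot t-(t\cot t)^2-t^2$ this gives
\begin{equation*}
	\nabla g_y=\frac{Y}{\norm{Y}^2}\big(g_y-g_y^2-\norm{Y}^2\big),
\end{equation*}
which is the negative of the expression you wrote, and the error would propagate if carried through. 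With that correction the $-\norm{Y}^2$ part of $W(g_y)\<Z,T\>$ is what produces the final term $-\<Z,T\>\<Y,W\>$ with the right sign, the $(g_y-g_y^2)/\norm{Y}^2$ coefficient of the symmetric sum comes out correctly, and the remaining bookkeeping collapses to \eqref{thirdderv} exactly as you describe.
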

\begin{proof}

It follows from Eq.~\eqref{2ndderv} that
\begin{align*} 
	\nabla^2 \rho_y(Z,T)&=\nabla  Y(Z,T)\\
	&=Z\<Y,T \> -\< Y,\nabla_ZT\> \\
	&= \<\nabla_ZY,T \>\\
	&= \< Z,T\> g_{y}-\<Y,T \>\frac{\<Y,Z \>}{\norm{Y}^2}\Big(g_{y}-1 \Big),
\end{align*}
for any vector fields $Z, T \in \XX(S^d)$. Here recall that \[g_{y}=\dd(x,y)\cot(\dd(x,y))=\norm{Y} \cot (\norm{Y}).\] The gradient of $g_{y}$ can be formally computed as  
\begin{equation} \label{devg1}
	\nabla g_{y}=\frac{Y}{\norm{Y}}g_{y}',
\end{equation}
where $g_{y}'$ is the formal derivative
\[ g_{y}'=\cot(\norm{Y})-\frac{\norm{Y}}{\sin^2 (\norm{Y})}, \]
 of $g_{y}.$ Using the identity
\[ t(t\cot t)'=t \cot t-(t\cot t)^2-t^2 \]
we have
\begin{equation*}
	\norm{Y}g_{y}'=g_{y}-g_{y}^2-\norm{Y}^2. 
\end{equation*} 
Thus, Eq.~\eqref{devg1} can be rewritten as
\begin{equation} \label{devg}
	\nabla g_{y}=\frac{Y}{\norm{Y}^2}\Big( g_{y}-g_{y}^2-\norm{Y}^2 \Big).
\end{equation}
Using Eqs.~\eqref{2ndderv} and \eqref{devg}, we derive the formula \eqref{thirdderv} of the third derivative $\nabla^3\rho_y$ of $\rho_y$.
\end{proof}

\subsubsection{The fourth derivative tensor}
Set
\begin{align*}
	\begin{split}
		\boldsymbol I_1&= \< Z,T\>\< U,W\>+ \<W,Z \>\< U,T\>+\< W,T\>\< U,Z\>,\\
		\boldsymbol I_2&= \<Y,U \>\<Y,W\>\< Z,T\>+\<Y,U \>\<Y,Z\>\< W,T\>+\<Y,U \>\<Y,T\>\< Z,W\>,\\
		\boldsymbol I_3&= \<U,W \>\< Y,Z\>\< Y,T\>+\<U,Z \>\< Y,W\>\< Y,T\>+\<U,T \>\< Y,Z\>\< Y,W\>,\\
		\boldsymbol I_4&= \<Y,U\>\<Y,W \>\<Y,Z \>\< Y,T\>.
	\end{split}
\end{align*}
Then we have the following result about the fourth derivative $\nabla^4F_{\muu}$.
\begin{proposition}\label{prop:4thderv} 
The fourth derivative tensor $\nabla^4\rho_y$ of $\rho_y$ is 
\begin{align} 
	\begin{split} \label{fourthdev2}
		\nabla^4\rho_y(U,W,Z,T)&= \BI_1\frac{g_{y}^2-g_{y}^3}{\norm{Y}^2} +\frac{\BI_2}{\norm{Y}^2} \bigg( \frac{3g_{y}^3-3g_{y}^2}{\norm{Y}^2}-1 +2g_{y}\bigg)\\
		&\hspace{1em}+\frac{\BI_3}{\norm{Y}^2}\bigg( \frac{3g_{y}^3-3g_{y}^2} {\norm{Y}^2} +g_{y} \bigg)\\
		&\hspace{1em}+\frac{\BI_4}{\norm{Y}^4}\bigg( \frac{15g_{y}^2-15g_{y}^3}{\norm{Y}^2}+4-9g_{y} \bigg)\\
		&\hspace{1em}-\<Z,T \>\< U,W\>g_{y}+\frac{\< Z,T\>\< Y,W\>\<Y,U\>}{\norm{Y}^2}(g_{y}-1).
	\end{split} 
\end{align}
\end{proposition}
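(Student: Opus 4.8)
The plan is to obtain $\nabla^4\rho_y$ from $\nabla^3\rho_y$ by one further covariant differentiation, exactly as Proposition~\ref{prop:3rdderv} obtained $\nabla^3\rho_y$ from the Hessian. Viewing $\nabla^3\rho_y$ as a $3$-tensor and using the same convention as in \eqref{thirdderv} (the new slot is placed first), one has
\begin{align*}
	\nabla^4\rho_y(U,W,Z,T) &= U\big(\nabla^3\rho_y(W,Z,T)\big) - \nabla^3\rho_y(\nabla_U W,Z,T)\\
	&\hspace{1em} - \nabla^3\rho_y(W,\nabla_U Z,T) - \nabla^3\rho_y(W,Z,\nabla_U T).
\end{align*}
Because the right-hand side is $C^{\infty}(M)$-linear in each of $U,W,Z,T$, it defines a tensor, so it suffices to evaluate it at an arbitrary fixed point $x\in M\minus\CC_y$; there one chooses the vector fields $U,W,Z,T$ so that $\nabla U=\nabla W=\nabla Z=\nabla T=0$ at $x$. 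All three correction terms then vanish at $x$, and $U$ acts nontrivially only on the $Y$-dependent scalars $g_{y}$ and $\norm{Y}^2$ and on the inner products $\<Y,W\>,\<Y,Z\>,\<Y,T\>$, while $\<Z,T\>,\<W,T\>,\<W,Z\>$ are constant to first order at $x$.

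The computation then runs entirely on three identities that are already available. From \eqref{2ndderv}, $\nabla_U Y=\nabla^2\rho_y(U)=Ug_{y}-Y\tfrac{\<Y,U\>}{\norm{Y}^2}(g_{y}-1)$; from \eqref{devg}, $\nabla g_{y}=\tfrac{Y}{\norm{Y}^2}(g_{y}-g_{y}^2-\norm{Y}^2)$, hence $Ug_{y}=\tfrac{\<Y,U\>}{\norm{Y}^2}(g_{y}-g_{y}^2-\norm{Y}^2)$; and since $\norm{Y}^2=2\rho_y$ one has $\nabla\norm{Y}^2=2Y$, so $U\norm{Y}^2=2\<Y,U\>$, which also gives $U(\norm{Y}^{-2})$ and $U(\norm{Y}^{-4})$. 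Combining the first two, $U\<Y,W\>=\<\nabla_U Y,W\>=\<U,W\>g_{y}-\<Y,W\>\tfrac{\<Y,U\>}{\norm{Y}^2}(g_{y}-1)$ at $x$, and likewise for $\<Y,Z\>$ and $\<Y,T\>$. One then differentiates, one summand at a time, the three groups in formula \eqref{thirdderv}: the piece with coefficient $(g_{y}-g_{y}^2)/\norm{Y}^2$ on $\<Z,T\>\<Y,W\>+\<W,T\>\<Y,Z\>+\<W,Z\>\<Y,T\>$, the piece $\<Y,W\>\<Y,Z\>\<Y,T\>(3g_{y}^2-3g_{y}+\norm{Y}^2)/\norm{Y}^4$, and the term $-\<Z,T\>\<Y,W\>$, applying the Leibniz rule to each product and substituting the identities above.

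The final step is organizational: every term produced must be matched against one of the symmetric combinations $\BI_1,\BI_2,\BI_3,\BI_4$ or against the two inhomogeneous terms of \eqref{fourthdev2}, and the scalar coefficients—rational functions of $g_{y}$ and $\norm{Y}$—tallied. I expect this collection-and-symmetrization to be the main obstacle. For instance, differentiating $\<Z,T\>\<Y,W\>$ in direction $U$ yields both a $g_{y}\<Z,T\>\<U,W\>$ contribution and a $\<Z,T\>\<Y,W\>\<Y,U\>$ contribution, and when these are summed over the cyclic family $\{(Z,T;W),(W,T;Z),(W,Z;T)\}$ and further combined with the $\<Y,U\>$-factor emitted by differentiating the coefficient $(g_{y}-g_{y}^2)/\norm{Y}^2$, one must verify that the totals reassemble exactly into $\BI_1,\BI_2,\BI_3$ with the stated coefficients; similarly one checks that differentiating $\<Y,W\>\<Y,Z\>\<Y,T\>$ and its coefficient feeds correctly into $\BI_2,\BI_3,\BI_4$. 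Once the coefficients of $g_{y},g_{y}^2,g_{y}^3$ and of the powers of $\norm{Y}$ are read off for each $\BI_j$, formula \eqref{fourthdev2} follows, with no geometric input beyond \eqref{2ndderv} and \eqref{devg}.
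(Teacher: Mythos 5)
Your proposal follows essentially the same route as the paper: differentiate $\nabla^3\rho_y$ covariantly once more, use Eq.~\eqref{2ndderv} for $\nabla_UY$, Eq.~\eqref{devg} for $Ug_y$, and $U\norm{Y}^2=2\<Y,U\>$, then regroup the resulting products into the symmetric combinations $\BI_1,\ldots,\BI_4$ (the paper carries out exactly this bookkeeping via the intermediate quantities $\boldsymbol\Sigma_i,\boldsymbol\Sigma_i'$, with your parallel-frame trick appearing implicitly as the cancellation of the correction terms against the derivatives of $\<Z,T\>,\<W,T\>,\<W,Z\>$). The ingredients and the organization you describe are precisely those of the paper's proof, so the argument is correct, with only the final coefficient tally left to be checked mechanically.
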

\begin{proof}
The fourth derivative tensor is the following $4$-tensor 
\begin{align*} 
	\begin{split}
		\nabla^4\rho_y(U,W,Z,T)&=U\big( \nabla^3\rho_y(W,Z,T)\big)-\nabla^3\rho_y(\nabla_UW,Z,T)-\nabla^3\rho_y(W,\nabla_UZ,T)\\
		&\hspace{1em}-\nabla^3\rho_y(W,Z,\nabla_UT).\\
	\end{split}
\end{align*}
To simplify notations, let us introduce
\begin{align*}
	\indent \boldsymbol\Sigma_1&=\< Z,T\>\< Y,W\>+\<W,T\>\< Y,Z\>+\< W,Z\>\< Y,T\>,\\
	\boldsymbol\Sigma'_1&= \< Z,T\>\< \nabla_UY,W\>+\<W,T\>\<  \nabla_UY,Z\>+\< W,Z\>\<  \nabla_UY,T\>,\\
	\boldsymbol\Sigma_2&= \<Y,W\>\<Y,Z\>\<Y,T\>,\\
	\boldsymbol\Sigma'_2&=\<\nabla_UY,W\>\<Y,Z\>\<Y,T\>+\<Y,W\>\<\nabla_UY,Z\>\<Y,T\>+\<Y,W\>\<Y,Z\>\<\nabla_UY,T\>.
\end{align*}
Then
\begin{align} 
	\begin{split} \label{fourthdrev1}
		\nabla^4\rho_y(U,W,Z,T)&=\boldsymbol\Sigma'_1 \frac{g_{y}-g_{y}^2}{\norm{Y}^2}+\boldsymbol\Sigma_1U\Big(\frac{g_{y}-g_{y}^2}{\norm{Y}^2} \Big)\\
			&\hspace{1em}+\boldsymbol\Sigma'_2\frac{3g_{y}^2-3g_{y}+\norm{Y}^2}{\norm{Y}^4}  + \boldsymbol\Sigma_2U\Big( \frac{3g_{y}^2-3g_{y}+\norm{Y}^2}{\norm{Y}^4}\Big) \\
			&\hspace{1em}-\<Z,T \>\<\nabla_UY,W \>.
	\end{split}
\end{align}
Equations~\eqref{2ndderv} and \eqref{devg} give us explicit expressions of $\boldsymbol \Sigma'_1$ and $\boldsymbol \Sigma'_2$:
\begin{align*}
	\begin{split}
		\boldsymbol\Sigma'_1=&\big( \< Z,T\>\< U,W\>+ \<W,Z \>\< U,T\>+\< W,T\>\< U,Z\> \big)g_{y}\\
		&+\frac{1-g_{y}}{\norm{Y}^2}\Big(\<Y,U \>\<Y,W\>\< Z,T\>+\<Y,U \>\<Y,Z\>\< W,T\>+\<Y,U \>\<Y,T\>\< Z,W\> \Big),\\
		\boldsymbol\Sigma'_2=&\big( \<U,W \>\< Y,Z\>\< Y,T\>+\<U,Z \>\< Y,W\>\< Y,T\>+\<U,T \>\< Y,Z\>\< Y,W\> \big)g_{y}\\
		&+3\<Y,U\>\<Y,W \>\<Y,Z \>\< Y,T\>\frac{1-g_{y}}{\norm{Y}^2}.\\
	\end{split}	
\end{align*}
In addition,
\begin{align} \label{Uf}
	\noindent U\Big( \frac{g_{y}-g_{y}^2}{\norm{Y}^2} \Big)=&\frac{\<Y,U \>}{\norm{Y}^4}\Big(2g_{y}^3-g_{y}^2-g_{y}-\norm{Y}^2+2\norm{Y}^2g_{y} \Big) .
\end{align}
Then $\boldsymbol{\Sigma'_1}$ and $\boldsymbol{\Sigma'_1}$ can be rewritten as 
\begin{align}
	\begin{split} \label{dervS}
		\boldsymbol\Sigma_1'&=g_{y} \boldsymbol I_1+(1-g_{y})\frac{\boldsymbol I_2}{\norm{Y}^2}, \\
		\boldsymbol\Sigma_2'&=g_{y}\boldsymbol I_3+3(1-g_{y})\frac{\boldsymbol I_4}{\norm{Y}^2}.
	\end{split}
\end{align}	
Eq.~\eqref{fourthdev2} now follows from Eqs.~\eqref{fourthdrev1}, \eqref{Uf}, and \eqref{dervS}.
\end{proof}
\section{Behaviors of Fr\'echet means and CLT on spheres}
\subsection{Differentiability conditions of the Fr\'echet function}\label{s:differentiability}
The main result in this section is to prove Proposition~\ref{p:diffcondprob}. Fix a point $p \in S^d$. Results from the previous section, namely Eqs.~\eqref{2ndderv}, \eqref{thirdderv}, and \eqref{fourthdev2}, imply that the squared distance function $\rho_y$ has the fourth derivative everywhere except at the cut locus $\CC_y$ of $y$. 
\begin{proof}[Proof of Proposition~\ref{p:diffcondprob}]
Suppose that $U,\ W,\ Z,\ T$ are unit vector fields in $\XX(S^d)$ and $p$ is a point in $S^d$. It follows from the Leibniz integral rule that the Fr\'echet function is differentiable of order $4$ at $p$ if the integrals:
\begin{align} \label{tensor}
	\begin{split}
		\nabla F(T)_p&=\int_{S^d}\nabla \rho_y(T)_{p}d\mu(y);\\ 
		\nabla^2F(Z,T)_p&=\int_{S^d}\nabla^2\rho_y(Z,T)_pd\mu(y);\\
		\nabla^3F(W,Z,T)_p&=\int_{S^d} \nabla^3\rho_y(W,Z,T)_p d\mu(y); \text{ and }\\
		\nabla^4F(U,W,Z,T)_p&= \int_{S^d}\nabla^4\rho_y(U,W,Z,T)_p d\mu(y)
	\end{split}
\end{align}
converge.
Choose a polar parametrization on $S^d$ so that $p$ has coordinate $(0,0,\ldots, 0)$. Let $\Omega:=[0,\pi]^{d-1}\times [0,2\pi]$ and define the following parametrization of $S^d$
\begin{align} \label{polarpara}
	\begin{split}
		\pp : \Omega &\to S^{d} \subset \RR^{d+1} \\
		\boldsymbol \varphi=(\varphi_1,\ldots,\varphi_{d-1},\varphi_{d}) &\mapsto (x_1,\ldots,x_{d+1}), \\
		x_1&=\cos \varphi_1, \\
		x_2&=\sin \varphi_1 \cos \varphi_2, \\
		&\ \vdots\\
		x_{d+1}&= \sin \varphi_1 \sin \varphi_2 \ldots \sin \varphi_{d},
	\end{split}
\end{align}
where $\varphi_1 \ldots \varphi_{d-1} \in [0,\pi], \ \varphi_{d}\in [0,2\pi].$ So $p$ has coordinate $(0,0, \ldots ,0)$ and its antipode $\CC_p$ has coordinate $(\pi, 0,\ldots ,0)$. If a point $y$ has coordinate $(\varphi_1,\ldots,\varphi_d)$ then
\begin{equation}
	\dd_y(p)=\norm{Y(p)}=\varphi_1 \label{simy}
\end{equation} 
and
\begin{equation}
	 g_{y}(p)=\norm{Y(p)}\cot(\norm{Y(p)})=\varphi_1\cot \varphi_1. \label{simg}
\end{equation}
Under the parametrization $\pp$, the volume element of $S^d$ is 
\[ dV=\sin ^{d-1}\varphi_1 \ldots \sin \varphi_{d-1}d \boldsymbol \varphi.\]
Let $f_p$ be the pullback of $f$ under the parametrization $\pp$ in Eq.~\eqref{polarpara}. Then
	\begin{equation} \label{eq:f_p}
		 d\mu=f_p(\boldsymbol \varphi)\sin ^{d-1}\varphi_1 \ldots\sin \varphi_{d}d \boldsymbol \varphi. 
	\end{equation}
It follows from Eqs.~\eqref{2ndderv}, \eqref{thirdderv} and \eqref{fourthdev2} that as $y$ approaches the antipode of $p$, the term $g_{y}^{j-1}(p)$ is the only unbounded one in the expression of $\nabla^j\rho_y(p)d\mu(y)$. Hence, the integrability of $\nabla^j\rho_y(p)d\mu(y)$
depends on the integrability of $g_{y}^{j-1}(p)$ for
$j=1,\ldots,4$. It then suffices to require that the integral
\begin{align} 
	\begin{split} \label{diffcond}
		\int_{S^d}g^{j-1}_{y}(p)d\mu(y)
		&=\int_{\Omega} \varphi^{j-1}_1\cot^{j-1}(\varphi_1)f_p(\boldsymbol\varphi)\sin ^{d-1}\varphi_1 \ldots \sin \varphi_{d}d\boldsymbol\varphi\\
		&=\int_{\Omega} \varphi^{j-1}_1\cos^{j-1}(\varphi_1)f_p(\boldsymbol\varphi)\sin ^{d-j}\varphi_1 \sin^{d-2}\varphi_2 \ldots \sin \varphi_{d}d \boldsymbol\varphi
	\end{split}
\end{align} 
 converges for $j=1,2 \ldots, 4$.
 
Let $\Omega_{\epsilon}=\{ \varphi \in \Omega :\varphi_1 \in (\pi-\epsilon,\pi ] \} $ be the preimage under the parametrization map $\pp$ of $B_{\epsilon}(\CC_p)$. Conditions \hyperref[cond:i]{(\emph{i})} and \hyperref[cond:ii]{(\emph{ii})} translate into 
		\begin{itemize} \label{condition3.2}
			\item[$(i')$.] $f_p$ is bounded on $\Omega_{\epsilon}$ when $d\geq j$,
			\label{cond:i1}
			\item[$(ii')$.] $f_p$ vanishes on $\Omega_{\epsilon}$ when $d<j$. \label{cond:ii1}
		\end{itemize} 
		First, assume that $d\geq j$ and $f_p$ is bounded on $\Omega_{\epsilon}$, which is condition \hyperref[cond:i1]{$(i')$}, and write the right integral in Eq.~\eqref{diffcond} as 
		\begin{align*}
			&\int_{\Omega} \varphi^{j-1}_1\cos^{j-1}(\varphi_1)f_p(\varphi)\sin ^{d-j}(\varphi_1)\sin^{d-2}\varphi_2 \ldots \sin \varphi_{d}d\boldsymbol \varphi \\
			&\hspace{.5em}=\int_{\Omega_{\epsilon}} \varphi^{j-1}_1\cos^{j-1}(\varphi_1)f_p(\varphi)\sin ^{d-j}(\varphi_1) \sin^{d-2}\varphi_2 \ldots \sin \varphi_{d}d \boldsymbol\varphi\\
			&\hspace{2.5em}+\int_{\Omega_{\epsilon}^C} \frac{\varphi^{j-1}_1}{\sin^{j-1}(\varphi_1)}\cos^{j-1}(\varphi_1)f_p(\varphi)\sin ^{d-1}\varphi_1 \sin^{d-2}\varphi_2 \ldots \sin \varphi_{d}d \boldsymbol\varphi\\
			&\hspace{.5em}=I_1+I_2.
		\end{align*}
		The integral $I_2$ converges as $\frac{\varphi^{j-1}_1}{\sin^{j-1}(\varphi_1)}\cos^{j-1}(\varphi_1)$ is bounded on $\Omega_{\epsilon}^C$. For the convergence of $I_1$, note that $f_p$ is bounded on $\Omega_{\epsilon}$, hence the function under the integral of $I_1$ is bbounded on $\Omega_{\epsilon}$. Thus the integral in Eq.~\eqref{diffcond} converges.
		
		Now suppose that condition \hyperref[cond:ii1]{$(ii')$} holds. That means $d<j$ and $f_p(\boldsymbol\varphi)=0$ for all $\boldsymbol\varphi \in \Omega_{\epsilon}$. Then the integral in Eq.~\eqref{diffcond} reduces to just $I_2$ and hence converges.
		        
		We have proved that the Fr\'echet function $F$ is differentiable of order $j$ at $p$. Next, observe that conditions \hyperref[cond:i]{(\emph{i})} and \hyperref[cond:ii]{(\emph{ii})} still hold if we replace $p$ by any point $q \in B_{\epsilon}(p)$. The arguments above then can be applied to $q$, so $F$ is differentiable of order $j$ at $q$. Thus $F$ is differentiable of order $j$ on $B_{\epsilon}(p)$.
\end{proof}


\subsection{Behavior of local Fr\'echet means}\label{s:behavior}

We give a proof of Proposition~\ref{prop:derivative_tensors} in this section. Note that the \fc mean is assumed to be unique and Assumption~\ref{assumption:1} is in effect---that is, $\mu$ is absolutely continuous and rotationally symmetric about a point $p$.

Since $\mu$ is rotationally symmetric about $p$, so is the Fr\'echet function $F(x)$. Combining with the assumption that $F(x)$ is differentiable in a neighborhood of $p$, it suffices to study the behavior of $F(x)$ along a direction starting from $p$.

\begin{proof}[Proof of Proposition~\ref{prop:derivative_tensors}]
The first order tensor vanishes due to symmetry. We proceed to compute higher order tensors.
Recall from the previous section that $Y$ is the vector field $\nabla \rho_y$. Given a vector field $Z$, let $\theta(x)=\angle(Y(x),Z(x))$ be the angle between $Y$ and $Z$. It follows from Eqs.~\eqref{2ndderv}, \eqref{thirdderv} and \eqref{fourthdev2} that
\begin{align} \label{e:deriv_tensors_sphere}
	\begin{split}
		\nabla^2\rho_y(Z,Z)&=\norm{Z}^2\big(g_{y}\sin^2 \theta +\cos^2 \theta \big),\\
		\nabla^3\rho_y(Z,Z,Z)&=\frac{\norm{Z}^3}{\norm{Y}}\big(3g_{y}-3g_{y}^2-\norm{Y}^2 \big)\big( \cos \theta -\cos^3 \theta \big),\\
		\nabla^4\rho_y(Z,Z,Z,Z)&= \frac{\norm{Z}^4}{\norm{Y}^2}\Bigg( 3g_{y}^2-3g_{y}^3-\norm{Y}^2 g_{y}\\
		&\hspace{4em}+\cos^2\theta \Big(18g_{y}^3-18g_{y}^2+10\norm{Y}^2g_{y}-4\norm{Y}^2 \Big)\\
		&\hspace{4em}+\cos^4 \theta \Big( 15g_{y}^2-15g_{y}^3-9\norm{Y}^2g_{y}+4\norm{Y}^2 \Big)  \Bigg)\\
		&=\frac{\norm{Z}^4}{\norm{Y}^2}\Bigg(-\sin^2\theta \Big( 12g_{y}^2-12g_{y}^3 -8\norm{Y}^2g_{y}+4\norm{Y}^2\Big)  \\
		&\hspace{4em}+ \sin^4 \theta \Big(15g_{y}^2-15g_{y}^3-9\norm{Y}^2g_{y}+4 \norm{Y}^2 \Big) \Bigg).
	\end{split}
\end{align}Assume, without loss of generality, that $\theta =\varphi_2$ and $\norm{Z(p)}=1$. Recall from Eq.~\eqref{simy} and Eq.~\eqref{simg} that
\begin{align*}
	\norm{Y(p)}&=\varphi_1,\\
	g_{y}(p)&=\varphi_1\cot \varphi_1.
\end{align*}
Thus the second tensor of $F(x)$ is rewritten as follows.
\begin{align*}
	\begin{split}
		\nabla^2F(Z,Z)_p&=\int_{\Omega}\Big(\varphi_1 \cot\varphi_1 \sin^2 \varphi_2+\cos^2 \varphi_2\Big)f_p(\varphi_1) \sin^{d-1}\varphi_1  \ldots \sin \varphi_n d\boldsymbol\varphi\\
		&=V(S^{d-2})\int_0^{\pi}\int_0^{\pi} \Big(\varphi_1\cot \varphi_1 \sin^{d}\varphi_2\\
		&\text{\hspace{3cm}}+\cos^2\varphi_2\sin^{d-2}\varphi_2  \Big)\sin^{d-1}\varphi_1f_p(\varphi_1)d\varphi_1 d\varphi_2,
	\end{split}
\end{align*}
where $V(S^k)$ is the volume of the $k$-dimensional unit sphere. 
Set $a_k=\int_0^{\pi}\sin^k xdx$ and use the following identity
\[  \sin ^kx-\frac{k-1}{k}\sin^{k-2}x=-\frac{d}{dx} \Big( \frac{1}{k}\cos x \sin^{k-2}x\Big), \]
we have that 
\[ a_k=\frac{k-1}{k}a_{k-2}.\]
Write $\nabla^2F(Z,Z)_p$ as 
\begin{align*}
	\begin{split}
		\nabla^2F(Z,Z)_p&=V(S^{d-2})\int_0^{\pi}\Big(a_d \varphi_1\cot \varphi_1+a_{d-2}-a_d \Big)\sin^{d-1}\varphi_1f(\varphi_1)d\varphi_1\\
		&=V(S^{d-2})\int_0^{\pi}\frac{a_d}{d-1}\Big((d-1)\varphi_1\cos \varphi_1 \sin^{d-2} \varphi_1 +\sin^{d-1}\varphi_1\Big)f(\varphi_1)d\varphi_1\\
		&=\frac{V(S^{d-2})a_{d-2}}{d} \int_0^{\pi}f(\varphi_1)d(\varphi_1 \sin^{d-1}\varphi_1).\\
	\end{split}
\end{align*}
The identity $V(S^d)=a_{d-1}V(S^{d-1})$ yields 
\begin{equation}\label{2nddervre}
	\nabla^2F(Z,Z)_p=\frac{V(S^{d-1})}{d}\int_0^{\pi}f(\varphi_1)d(\varphi_1 \sin^{d-1}\varphi_1).
\end{equation}
It is not hard to verify that the third order tensor vanishes due to symmetry.
\begin{align} \label{3rddervfin}
	\begin{split}
		\nabla^3F(Z,Z,Z)_p&=V(S^{d-2})\int_{0}^{\pi}\int_0^{\pi}G(\varphi_1) \cos{\varphi_2}\sin^{d}\varphi_2d\varphi_2d\varphi_1 \\
		&=0,\\
	\end{split}
\end{align}
with $G(\varphi_1)$ is some function in $\varphi_1$.

\noindent With a little more computation we get the result for the fourth order derivative tensor.
\begin{equation} \label{fourthdervre}
	\begin{split}
			\nabla^4F(Z,Z,Z,Z)_p&=\frac{a_dV(S^{d-2})}{d+2}\int_0^{\pi}\Big(\big( \sin \varphi\cos \varphi -\varphi \cos^3\varphi\big)\big(3d-9 \big)\\&\hspace{7em}+\varphi \cos \varphi \sin^2 \varphi \big( 7-d \big) -4\sin^3\varphi \Big)\sin^{d-4}\varphi f(\varphi)d\varphi.
	\end{split}
\end{equation}
Observe that for $d=2,3$ and $\varphi \in [0,\pi],$
\[ \big( \sin \varphi\cos \varphi -\varphi \cos^3\varphi\big)\big(3d-9 \big)+\varphi \cos \varphi \sin^2 \varphi \big( 7-d \big) -4\sin^3\varphi <0. \] 
Therefore for $d < 4,$
\begin{equation} \label{nlessthan4}
	\nabla^4F(Z,Z,Z,Z)_p<0.
\end{equation}
For $d \geq 4$ the tensor in Eq.~\eqref{fourthdervre} becomes
\begin{align} \label{fourthderv3}
	\begin{split}
		\nabla^4F(Z,Z,Z,Z)_p=\frac{a_dV(S^{d-2})}{d+2}\int_0^{\pi}f(\varphi)d\Big(\sin^{d-3}\varphi \big(2\varphi\sin^2\varphi+3\cos \varphi \sin \varphi-3\varphi \big) \Big).
	\end{split}
\end{align}

Next, due to symmetry, we show that the derivative tensors in Proposition~\ref{prop:derivative_tensors} have no other components. We present a proof only for the second order tensor; the arguments for higher order tensors are the same.

Suppose that two vector fields $Z,T \in \XX(S^d)$ are orthonormal at $p$. Then Eq.~\eqref{2ndderv} gives
\begin{align*} 
	\begin{split}
		\nabla^2\rho_y(Z,T)=-\frac{\<Y,Z\>\<Y,T\>}{\norm{Y}^2}(g_y-1),\\
	\end{split}
\end{align*}
and hence
\begin{align}\label{6.1.1}
	\nabla^2F(Z,T)_p=-\int_{S^d}\frac{\<\exp^{-1}_p(y),Z \>_p \<\exp^{-1}_p(y),T\>_p}{\dd^2(y,p)}(g_y(p)-1)d\mu(y).
\end{align}
Let $A$ be a reflection on $T_pS^d$ that fixes $Z(p)$ and sends $T(p)$ to $-T(p)$. Since $Q$ is rotationally symmetric about $p$, it is fixed under the exponential of $A$ denoted by 
	\begin{align*}
		\tilde{A}=\exp (A): S^d &\to S^d\\
					y&\mapsto\exp_p(A\exp^{-1}_p(y)).
	\end{align*}
In other words, $d\mu(y)=d\mu\big(\tilde{A}(y) \big)$. For the sake of simplicity, write $\tilde{A}y$ for $\tilde{A}(y)$. Since $\dd(y,p)=\dd(\tilde{A}y,p)$,

\begin{align*}
	\nabla^2F(Z,T)_p&=-\int_{S^d}\frac{\<\exp^{-1}_p(y),Z \>_p \<\exp^{-1}_p(y),T\>_p}{\dd^2(y,p)}(g_y(p)-1)d\mu(y)\\
	&=-\int_{S^d}\frac{\<\exp^{-1}_p(\tilde{A}y),Z \>_p \<\exp^{-1}_p(\tilde{A}y),T\>_p}{\dd^2(\tilde{A}y,p)}(g_{\tilde{A}y}(p)-1)d\mu(\tilde{A}y)\\
	&=-\int_{S^d}\frac{\<A\exp^{-1}_p(y),Z \>_p \<A\exp^{-1}_p(y),T\>_p}{\dd^2(y,p)}(g_{y}(p)-1)d\mu(y)\\
	&=-\int_{S^d}\frac{\<\exp^{-1}_p(y),AZ \>_p \<\exp^{-1}_p(y),AT\>_p}{\dd^2(y,p)}(g_{y}(p)-1)d\mu(y)\\
	&=\int_{S^d}\frac{\<\exp^{-1}_p(y),Z \>_p \<\exp^{-1}_p(y),T\>_p}{\dd^2(y,p)}(g_{y}(p)-1)d\mu(y)\\
	&=-\nabla^2F(Z,T)_p.
\end{align*}
Thus $\nabla^2F(Z,T)_p=0$. Similarly, $\nabla^3F(Z,Z,T)_p=0$ and $\nabla^4(Z,Z,Z,T)=0.$ These results, in conjunction with Eqs.~\eqref{2nddervre},~\eqref{3rddervfin},~\eqref{fourthdervre}, and \eqref{nlessthan4} yields the results in Proposition~\ref{prop:derivative_tensors}.
\end{proof}

\begin{remark}
	This method of
applying Jacobi fields with a non-parametric approach to derive derivatives of squared distance
function could be applied to symmetric spaces since Jacobi fields in
such spaces can be explicitly expressed. However, behaviors of
Frech\'et mean on symmetric spaces still remains unknown because
derivatives of the squared distance function do not translate well to
corresponding derivatives of Frech\'et function as it is in the case
of spheres.

\end{remark}
\section*{Acknowledgements}\label{s:acknowledgements}
A great debt goes to Stephan Huckemann and Benjamin Elztner for enlightening conversations on smeariness on spheres. 
The author gratefully thanks his advisor, Ezra Miller, for invaluable advice and comments thoughout the project.
The author was supported by NSF DMS-1702395 for conference travel.




\pagebreak

\end{document}